  \def\clap#1{\hbox to 0pt{\hss#1\hss}}
\providecommand{\mat}[1]{\bm{#1}}%
\renewcommand{\vec}[1]{\mathbf{#1}}
\providecommand{\mA}{\ensuremath{\mat{A}}}
\providecommand{\mB}{\ensuremath{\mat{B}}}
\providecommand{\mC}{\ensuremath{\mat{C}}}
\providecommand{\mE}{\ensuremath{\mat{E}}}
\providecommand{\mG}{\ensuremath{\mat{G}}}
\providecommand{\mH}{\ensuremath{\mat{H}}}
\providecommand{\mU}{\ensuremath{\mat{U}}}
\providecommand{\mV}{\ensuremath{\mat{V}}}
\providecommand{\mW}{\ensuremath{\mat{W}}}
\providecommand{\mX}{\ensuremath{\mat{X}}}
\providecommand{\mSigma}{\ensuremath{\mat{\Sigma}}}
\providecommand{\vg}{\ensuremath{\vec{g}}}
\providecommand{\vs}{\ensuremath{\vec{s}}}
\providecommand{\vt}{\ensuremath{\vec{t}}}
\providecommand{\vv}{\ensuremath{\vec{v}}}
\providecommand{\vw}{\ensuremath{\vec{w}}}
\providecommand{\vx}{\ensuremath{\vec{x}}}
\providecommand{\vy}{\ensuremath{\vec{y}}}
\providecommand{\vz}{\ensuremath{\vec{z}}}
\newcommand{\hmB}{\hat{\mB}}
\newcommand{\hmC}{\hat{\mC}}
\newcommand{\hmG}{\hat{\mG}}
\newcommand{\hSigma}{\hat{\Sigma}}
\newcommand{\hLambda}{\hat{\Lambda}}
\newcommand{\hTheta}{\hat{\Theta}}
\newcommand{\hmU}{\hat{\mU}}
\newcommand{\hmV}{\hat{\mV}}
\newcommand{\hmW}{\hat{\mW}}
\newcommand{\hlambda}{\hat{\lambda}}
\newcommand{\htheta}{\hat{\theta}}
\newcommand{\sX}{\mathcal{X}}
\newcommand{\Exp}[1]{\mathbb{E}\left[#1\right]}
\newcommand{\Prob}[1]{\mathbb{P}\left\{#1\right\}}
\newcommand{\bmat}[1]{\begin{bmatrix}#1\end{bmatrix}}
\newcommand{\trace}[1]{\operatorname{trace}\left(#1\right)}
\newcommand{\gf}{\nabla_\vx f}
\newcommand{\intrho}[1]{\int #1 \,\rho\,d\vx}
\newcommand{\lmax}{\lambda_{\text{max}}}
\newcommand{\dist}[2]{\operatorname{dist}(\operatorname{ran}(#1),\,\operatorname{ran}(#2))}
\title{COMPUTING ACTIVE SUBSPACES WITH MONTE CARLO}
\author{
Paul G.~Constantine\thanks{Department of Applied Mathematics and Statistics, Colorado School of Mines, Golden, CO 80401 (\texttt{paul.constantine@mines.edu}).}
\and
David F.~Gleich\thanks{Department of Computer Science, Purdue University, West Lafayette, IN (\texttt{dgleich@purdue.edu})}
}
\begin{document}
\maketitle

\begin{abstract}
Active subspaces can effectively reduce the dimension of high-dimensional parameter studies enabling otherwise infeasible experiments with expensive simulations. The key components of active subspace methods are the eigenvectors of a symmetric, positive semidefinite matrix whose elements are the average products of partial derivatives of the simulation's input/output map. We study a Monte Carlo method for approximating the eigenpairs of this matrix. We offer both theoretical results based on recent non-asymptotic random matrix theory and a practical approach based on the bootstrap. We extend the analysis to the case when the gradients are approximated, for example, with finite differences. Our goal is to provide guidance for two questions that arise in active subspaces: (i) How many gradient samples does one need to accurately approximate the eigenvalues and subspaces? (ii) What can be said about the accuracy of the estimated subspace, both theoretically and practically? We test the approach on both simple quadratic functions where the active subspace is known and a parameterized PDE with 100 variables characterizing the coefficients of the differential operator. 
\end{abstract}

\begin{keywords}active subspaces, dimension reduction\end{keywords}

\begin{AMS}\end{AMS}

\pagestyle{myheadings}
\thispagestyle{plain}
\markboth{PAUL G.~CONSTANTINE AND DAVID F.~GLEICH}{COMPUTING ACTIVE SUBSPACES WITH MONTE CARLO}

\section{Introduction}
\label{sec:intro}

Engineering models typically contain several input parameters that must be specified to produce a set of model outputs that contains one or more quantities of interest. The engineer's goal is to characterize the behavior of the quantities of interest as functions of the model's inputs. However, parameter studies---such as optimization and uncertainty quantification---are challenging when the number of inputs is large and the model involves an expensive computer simulation. In such cases, the engineer may analyze the output's sensitivity with respect to inputs to identify a subset of inputs whose variation changes the outputs the most~\cite{saltelli2008global}. In the best case, she can then limit parameter studies to key parameters and thus reduce the \emph{dimension} of the parameter study. This approach is appropriate when varying important parameters changes the outputs much more than varying the unimportant parameters. However, a model's output may depend on all the parameters through certain linear combinations, which generalizes seeking key parameters to seeking key directions in the parameter space. The \emph{active subspace} identifies important directions in the model's input space with respect to a particular quantity of interest; perturbing the inputs along these important directions changes the quantity of interest more, on average, than perturbing the inputs in orthogonal directions~\cite{constantine2014active}. For parameter studies whose work depends exponentially on the number of parameters---e.g., integration or response surface construction---the active subspace-enabled dimension reduction can permit otherwise infeasible studies.  

The active subspace is defined by a set of eigenvectors corresponding to large eigenvalues of the average outer product of the gradient with itself. These eigenpairs are properties of the map between model inputs and outputs, like Fourier coefficients or the Lipschitz constant. To determine if a function admits a low-dimensional active subspace---and thus reduce the dimension of the parameter studies---one must estimate these eigenpairs. This estimation is problematic because the elements of the matrix defining the eigenpairs are themselves high-dimensional integrals. Most deterministic numerical integration rules are impractical beyond a handful of variables, especially if the integrand is costly to evaluate. We therefore focus on a Monte Carlo approach to approximate the eigenpairs, where we take advantage of recent theoretical results that bound the number of samples needed to approximate the spectrum of sums of random matrices. Monte Carlo is attractive because it makes few restrictions on the function defining the quantity of interest. Under additional assumptions, one may be able to outperform Monte Carlo with specialized integration rules for integrands that depend on many variables, e.g., with sparse grids~\cite{Bungartz2004} or quasi-Monte Carlo~\cite{Caflisch1998}. 


In what follows, we analyze a Monte Carlo method for estimating the eigenpairs that uses independent samples of the function's gradient. After formally defining the active subspace in Section \ref{sec:asm}, we employ results from Tropp~\cite{tropp2012user} and Gittens and Tropp~\cite{gittens2011tail} to bound the probability that the estimated eigenvalues deviate from the true eigenvalues, which yields lower bounds on the number of samples needed for accurate estimation. We extend these results to the case where samples are approximate gradients (e.g., finite difference approximations). In Section \ref{sec:prac} we discuss a practical bootstrap approach to study the variability in the estimated eigenvalues, and we demonstrate these procedures numerically in Section \ref{sec:experiments}.

\paragraph{Notation}
We use bold lower case letters to denote vectors and bold upper case letters to denote matrices. Finite sample estimates are denoted with hats, e.g., $\hmC\approx\mC$. The functional $\lambda_k(\cdot)$ denotes the $k$th eigenvalue of its argument, ordered from algebraically largest to smallest; all matrices are symmetric, so the ordering is meaningful. A $\lambda$ on its own is an eigenvalue. Norms of vectors and matrices are 2-norms; the matrix 2-norm is the operator-induced norm given by the largest singular value. The partial ordering operator $\preceq$ is defined as follows: $\mA\preceq \mB$ means that $\mB-\mA$ is positive semidefinite.

\section{Active subspaces}
\label{sec:asm}

We represent the map from simulation inputs to the scalar-valued quantity of interest by a function $f:\sX\rightarrow\mathbb{R}$, where $\sX\subseteq\mathbb{R}^m$, with $m>1$, represents the set of simulation inputs, which we assume is centered at the origin and scaled so that each component of $\vx\in\sX$ has the same range. Let $\mathbb{R}^m$ be equipped with a weight function $\rho:\mathbb{R}^m\rightarrow\mathbb{R}_+$ that is bounded, strictly positive on the domain $\sX$, and zero outside of $\sX$. We also assume that $\rho$ is both separable and normalized to integrate to 1. In the context of uncertainty quantification, this weight function represents a given probability density function on the inputs; examples in this context include Gaussian, uniform, or data-conditioned Bayesian posterior density functions. We assume $f$ is differentiable and absolutely continuous, and we denote the gradient $\nabla_\vx f(\vx) = [\partial f/\partial x_1,\dots,\partial f/\partial x_m ]^T$ oriented as a column vector. 

We are interested in the following matrix, denoted $\mC$ and defined as
\begin{equation}
\label{eq:C}
\mC \;=\; \intrho{(\nabla_\vx f)(\nabla_\vx f)^T}.
\end{equation}
Samarov studied this matrix as one of several \emph{average derivative functionals} in the context of regression, where $f$ is the regression function~\cite{samarov1993exploring}. The matrix $\mC$ is symmetric and positive semi-definite, so it has a real eigenvalue decomposition
\begin{equation}
\label{eq:eigdecomp}
\mC \;=\; \mW\Lambda\mW^T, 
\qquad 
\Lambda \;=\; \mathrm{diag}(\lambda_1,\dots,\lambda_m),
\qquad
\lambda_1\geq\cdots \geq\lambda_m\geq 0,
\end{equation}
where $\mW$ is the orthogonal matrix of eigenvectors. Partition the eigenpairs,
\begin{equation}
\label{eq:splitting}
\mW = \bmat{\mW_1 & \mW_2},\qquad \Lambda = \bmat{\Lambda_1 & \\ & \Lambda_2},
\end{equation}
where $\mW_1$ contains the first $n<m$ eigenvectors, and $\Lambda_1$ contains the $n$ largest eigenvalues. The eigenvectors define new coordinates
\begin{equation}
\label{avs}
\vy=\mW_1^T\vx,\qquad\vz=\mW_2^T\vx.
\end{equation}
We call the column space of $\mW_1$ the \emph{active subspace} and the corresponding $\vy$ the \emph{active variables}. Similarly, $\mW_2$ defines the \emph{inactive subspace} with corresponding \emph{inactive variables} $\vz$. The following two lemmas justify this characterization; these are proved in~\cite{constantine2014active}.

\begin{lemma}
\label{lem:avgsqgrad}
The mean-squared directional derivative of $f$ with respect to the eigenvector $\vw_i$ is equal to the corresponding eigenvalue,
\begin{equation} 
\intrho{\big((\nabla_{\vx} f)^T\vw_i\big)^2} \;=\; \vw_i^T \mC \vw_i \;=\; \lambda_i.
\end{equation}
\end{lemma}

\begin{lemma}
\label{lem:grad}
The mean-squared gradients of $f$ with respect to the coordinates $\vy$ and $\vz$ satisfy
\begin{equation}
\begin{aligned}
\intrho{(\nabla_{\vy} f)^T(\nabla_{\vy} f)} &= \trace{\mW_1^T \mC \mW_1} = \lambda_1+\cdots+\lambda_n, \\
\intrho{(\nabla_{\vz} f)^T(\nabla_{\vz} f)} &= \trace{\mW_2^T \mC \mW_2} = \lambda_{n+1}+\cdots+\lambda_m.
\end{aligned}
\end{equation}
\end{lemma}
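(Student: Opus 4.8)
The plan is to relate the gradients in the rotated coordinates to the original gradient $\gf$ via the chain rule, and then to reduce both identities to trace computations that the eigendecomposition \eqref{eq:eigdecomp} evaluates directly. First I would use the fact that $\mW=\bmat{\mW_1 & \mW_2}$ is orthogonal, so the coordinate maps \eqref{avs} invert to $\vx=\mW_1\vy+\mW_2\vz$. Writing $f$ as a function of the new variables and differentiating through this linear relation gives, componentwise, $\partial f/\partial y_j=\sum_i(\partial f/\partial x_i)(\mW_1)_{ij}$, that is,
\begin{equation}
\nabla_{\vy} f \;=\; \mW_1^T\,\gf, \qquad \nabla_{\vz} f \;=\; \mW_2^T\,\gf.
\end{equation}
This is the only place the structure of the coordinate change enters; everything afterward is linear algebra.

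Next I would form the integrand $(\nabla_{\vy} f)^T(\nabla_{\vy} f)=(\gf)^T\mW_1\mW_1^T(\gf)$ and rewrite this scalar as a trace, $\trace{\mW_1^T (\gf)(\gf)^T \mW_1}$, using the cyclic property. Because the trace and the integral against $\rho$ are both linear, I can interchange them and pull the constant matrix $\mW_1$ outside, leaving $\trace{\mW_1^T \mC \mW_1}$ by the definition \eqref{eq:C} of $\mC$. The identical manipulation with $\mW_2$ handles the inactive case, so both middle equalities in the statement follow at once.

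Finally, to evaluate $\trace{\mW_1^T \mC \mW_1}$ I would substitute $\mC=\mW\Lambda\mW^T$ and use orthogonality. Since $\mW_1$ consists of the first $n$ columns of $\mW$, we have $\mW_1^T\mW=\bmat{\mI & \mzero}$, and combining this with the block form \eqref{eq:splitting} of $\Lambda$ gives $\mW_1^T\mC\mW_1=\Lambda_1$; taking the trace yields $\lambda_1+\cdots+\lambda_n$, and analogously $\mW_2^T\mC\mW_2=\Lambda_2$ gives $\lambda_{n+1}+\cdots+\lambda_m$. I do not expect a serious obstacle, as the argument is just the chain rule followed by a trace identity; the one point warranting care is that the integration measure stays $\rho\,d\vx$ in the original variables throughout, so the identity $\nabla_{\vy} f=\mW_1^T\gf$ is a pointwise statement about the gradient and no Jacobian or change-of-measure correction appears.
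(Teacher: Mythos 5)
Your proof is correct, and it is the standard argument for this lemma; note that the paper itself does not supply a proof but defers to the reference~\cite{constantine2014active}, where the reasoning is essentially identical to yours. The three ingredients you identify --- the pointwise chain-rule identity $\nabla_{\vy} f=\mW_1^T\gf$ (with no Jacobian correction, since the integration remains over $\vx$ with density $\rho$), the cyclic trace identity allowing the integral to pass inside to give $\trace{\mW_1^T\mC\mW_1}$, and the evaluation $\mW_1^T\mC\mW_1=\Lambda_1$ from the eigendecomposition --- are exactly what is needed, so there is nothing to add.
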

The eigenvalues $\Lambda$ and eigenvectors $\mW$ are properties of $f$. If the $m-n$ trailing eigenvalues $\Lambda_2$ are exactly zero, then $f$ is constant along the directions corresponding to $\mW_2$. If $\Lambda_2$ is not exactly zero but significantly smaller than $\Lambda_1$, then $f$ changes less, on average, in response to small changes in $\vz$ than small changes in $\vy$. If $f$ admits such a property, we would like to discover and exploit it in parameter studies by focusing on the variables $\vy$. In other words, we can reduce the dimension of the parameter studies from $m$ to $n<m$. 

Two special cases illustrate the active subspace. The first class of functions are index models that have the form $f(\vx)=h(\mA^T\vx)$, where $\mA\in\mathbb{R}^{m\times k}$ and $h:\mathbb{R}^k\rightarrow\mathbb{R}$. In this case, $\mC$ has rank at most $k$, and the active subspace is a subspace of the range of $\mA$. If $k=1$, then the one-dimesional active subspace can be discovered with a single evaluation of $\nabla_\vx f$ at any $\vx\in\sX$ such that the derivative $h'(\mA^T\vx)$ is not zero.
The second special case is a function of the form $f(\vx) = h(\vx^T\mH\vx)/2$, where $h:\mathbb{R}\rightarrow\mathbb{R}$, and $\mH$ is a symmetric $m\times m$ matrix. In this case
\begin{equation}
\mC \;=\; \mH \left(\intrho{(h')^2\,\vx\vx^T}\right)\mH^T, 
\end{equation}
where $h'=h'(\vx^T\mH\vx)$ is the derivative of $h$. This implies that the null space of $\mC$ is the null space of $\mH$ provided that $h'$ is non-degenerate. We study the example where $h(t) = t$ in Section \ref{sec:experiments}. 

If we can estimate $\Lambda$ and $\mW$ from \eqref{eq:eigdecomp}, then we can approximate $f(\vx)$ with a model of the form
\begin{equation}
f(\vx) \;\approx\; g(\mW_1^T\vx),
\end{equation}
where $g:\mathbb{R}^n\rightarrow\mathbb{R}$ is an appropriately constructed map. In~\cite{constantine2014active}, we derive error bounds for this approximation with a particular choice of $g$. We extend those error bounds to the case when $\mW_1$ is estimated with some error. The main goal of this paper is to study the error in $\mW_1$ when $\mC$ is estimated with Monte Carlo. 

\subsection{Related literature}
The idea of studying the eigenpairs of the average outer product of the gradients arose in statistics as \emph{average derivative functionals}~\cite{samarov1993exploring,hristache2001} for exploring structure in regression functions. In contrast to our $f(\vx)$, the regression function is generally unknown; to estimate the gradients of the unknown regression function, one can first fit a kernel-based model to a set of predictor/response pairs and then compute gradients from the approximation~\cite{Xia2007,fukumizu2014gradient}. In our case, the function is a map between the inputs and outputs of an engineering simulation; there is no random noise as in the regression problem. The set up in Russi's Ph.D.~thesis~\cite{Russi2010} is closer to ours. He applies the methods to physical simulations of chemical kinetics; this work is where we encountered the term \emph{active subspace}. Recent work in approximation theory by Fornasier, Schnass, and Vybiral~\cite{Fornasier2012} attempts to discover the parameters of the active subspace solely through queries of the function; guarantees on reconstruction follow from compressed sensing results under the assumption that $f$ is an index model. 

If the matrix $\mC$ were given as an input matrix, or if we could easily compute matrix-vector products with $\mC$, then we could employ recent procedures for randomized low-rank approximation to estimate the desired eigenpairs~\cite{Halko-2011-randomness,Gittens-2013-nystrom}---assuming $\mC$ is well approximated by a low-rank matrix, which is often the case in practice. Unfortunately, we do not have easy access to the elements of $\mC$; estimating its eigenpairs requires estimating its elements. There may be fruitful relationships with
low-rank approximation of quasimatrices and 
cmatrices~\cite{Townsend-2014-continuous} that are worth exploring.

\section{Computing active subspaces}
\label{sec:comp}


If drawing independent samples from the density $\rho$ is cheap and simple, then a straightforward and easy-to-implement random sampling method to approximate the eigenvalues $\Lambda$ and eigenvectors $\mW$ proceeds as follows.
\begin{enumerate}
\item Draw $N$ samples $\vx_j$ independently from the measure $\rho$. 
\item For each $\vx_j$, compute $\nabla_\vx f_j = \nabla_\vx f(\vx_j)$. 
\item Approximate 
\begin{equation}
\label{eq:compC}
\mC \;\approx\;
\hmC \;=\;
\frac{1}{N} \sum_{j=1}^N (\nabla_\vx f_j)(\nabla_\vx f_j)^T.
\end{equation}
\item Compute the eigendecomposition $\hmC=\hmW\hLambda\hmW^T$.
\end{enumerate}
The last step is equivalent to computing the full SVD of the matrix 
\begin{equation}
\label{eq:gmat}
\hmB \;=\; \frac{1}{\sqrt{N}} \bmat{\nabla_\vx f_1 & \cdots & \nabla_\vx f_N}
\;=\;
\hmW\hSigma\hmV^T,
\end{equation}
where standard manipulations show that the $\hLambda=\hSigma\hSigma^T$, and the left singular vectors are the desired eigenvectors. The SVD perspective was developed by Russi~\cite{Russi2010} as the method to discover the active subspace. This SVD perspective also calls to mind randomized methods for subsampling the columns of $\hmB$, where $N\gg m$~\cite{Gittens-2013-nystrom}. If it were possible to evaluate the importance of a column of $\hmB$ without explicitly computing $\nabla_{\vx} f(\vx)$, then such ideas might prove useful.

For many simulations, the number $m$ of input parameters is small enough (e.g., tens to thousands) that computing the full eigendecomposition \eqref{eq:compC} or singular value decomposition \eqref{eq:gmat} is negligible compared to the cost of computing the gradient $N$ times; we consider this to be our case of interest. We are therefore concerned with understanding the number of gradient samples needed so that the estimates $\hLambda$ and $\hmW$ are close to the true $\Lambda$ and $\mW$. 

We apply recent work by Tropp~\cite{tropp2012user} and Gittens and Tropp~\cite{gittens2011tail} on the spectrum of sums of random matrices to answer these questions. We were motivated to use these tools by Section 7 in Gittens and Tropp~\cite{gittens2011tail}, which studies the spectrum of a finite sample estimate of a covariance matrix for a Gaussian random vector. In the present case, the gradient vector $\nabla_\vx f(\vx)$ is a deterministic function of $\vx$. However, if $\vx_j$ are drawn independently at random according to the density $\rho$, then we can interpret $\nabla_\vx f(\vx_j)$ as a random draw from an unknown density. This is a standard interpretation of Monte Carlo techniques for integration~\cite{mcbook}. In principle, our analysis approach could apply to model reduction of high-dimensional systems that use Grammian matrices~\cite{Antoulas2005}. 

\begin{theorem}
\label{thm:prob}
Assume that $\|\gf\|\leq L$ for all $\vx\in\sX$. Then for $0<\varepsilon\leq 1$,
\begin{equation}
\label{eq:upper}
\Prob{\hlambda_k \geq (1+\varepsilon)\lambda_k}
\;\leq\;
(m-k+1)\exp\left(\frac{-N\lambda_k\varepsilon^2}{4L^2}\right),
\end{equation}
and
\begin{equation}
\label{eq:lower}
\Prob{\hlambda_k \leq (1-\varepsilon)\lambda_k}
\;\leq\;
k\exp\left(\frac{-N\lambda_k^2\varepsilon^2}{4\lambda_1L^2}\right). 
\end{equation}
\end{theorem}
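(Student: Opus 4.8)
The plan is to recognize $\hmC$ as a sample average of independent, bounded, positive semidefinite rank-one matrices and then invoke the eigenvalue tail bounds of Tropp~\cite{tropp2012user} and Gittens and Tropp~\cite{gittens2011tail}. Set $X_j = (\gf_j)(\gf_j)^T$, so that $X_j\succeq\mzero$, $\|X_j\| = \|\gf_j\|^2\le L^2$, and, by the Monte Carlo interpretation of the samples, $\Exp{X_j}=\mC$. Writing $Y=\sum_{j=1}^N X_j = N\hmC$, we have $\lambda_k(Y)=N\hlambda_k$ and $\lambda_k(\Exp{Y})=N\lambda_k=:\mu_k$. The theorem thus reduces to two-sided deviation bounds for $\lambda_k(Y)$ about its mean $\mu_k$, for a sum of independent PSD matrices of size $m$ with uniform range $R=L^2$.

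For the upper tail \eqref{eq:upper}, I would apply the matrix Chernoff eigenvalue inequality directly. For such a sum it gives
\[
\Prob{\lambda_k(Y)\ge(1+\varepsilon)\mu_k}\;\le\;(m-k+1)\left[\frac{e^{\varepsilon}}{(1+\varepsilon)^{1+\varepsilon}}\right]^{\mu_k/R},
\]
which already supplies the prefactor $(m-k+1)$. Substituting $\mu_k=N\lambda_k$ and $R=L^2$ and using the elementary bound $(1+\varepsilon)\ln(1+\varepsilon)-\varepsilon\ge\varepsilon^2/4$ for $0<\varepsilon\le1$ to control the bracket converts this into $(m-k+1)\exp(-N\lambda_k\varepsilon^2/(4L^2))$. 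This tail is routine once the inequality is in hand.

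The lower tail \eqref{eq:lower} is the crux and cannot be obtained from the range $R$ alone; it requires the matrix variance, which is precisely why the exponent degrades by the factor $\lambda_k/\lambda_1$. The key estimate is the second-moment bound
\[
\Exp{X_j^2}\;=\;\Exp{\|\gf\|^2(\gf)(\gf)^T}\;\preceq\;L^2\,\Exp{(\gf)(\gf)^T}\;=\;L^2\mC,
\]
so that the matrix variance of $Y$ satisfies $\big\|\sum_j(\Exp{X_j^2}-(\Exp{X_j})^2)\big\|\le NL^2\lambda_1$. Feeding this variance proxy $v=NL^2\lambda_1$, the range $R=L^2$, the deviation $t=\varepsilon N\lambda_k$, and the prefactor $k$ into the eigenvalue Bernstein lower-tail inequality for $\lambda_k(Y)$, and then discarding the $Rt$ term against $v$ (here $Rt/v=\varepsilon\lambda_k/\lambda_1\le1$), yields $k\exp(-N\lambda_k^2\varepsilon^2/(4L^2\lambda_1))$ after absorbing the numerical constants into the $1/4$.

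I expect the main obstacle to be exactly this lower tail. A naive union or Chernoff argument controls $\lambda_k(Y)$ only through $\mu_k$ and $R$ and would miss the $\lambda_1$ in the denominator; the essential step is to carry the variance through the analysis and to verify $\Exp{\|\gf\|^2(\gf)(\gf)^T}\preceq L^2\mC$, after which the simplification to the clean constant $1/4$ and the prefactor $k$ is bookkeeping. The boundedness hypothesis $\|\gf\|\le L$ enters as the uniform range $R=L^2$ and, through the displayed second-moment bound, as the variance scale $L^2\lambda_1$; the simplification of the Bernstein exponent additionally uses $0<\varepsilon\le1$ and $\lambda_k\le\lambda_1$. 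I would track these carefully, since it is the variance scale $\lambda_1$---rather than $\lambda_k$---that produces the asymmetric $\lambda_k^2/\lambda_1$ dependence distinguishing the two tails.
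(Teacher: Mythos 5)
Your argument is correct and lands on exactly the stated constants, but it reaches the upper tail by a genuinely different route than the paper. For \eqref{eq:upper} you invoke the eigenvalue \emph{Chernoff} bound for sums of bounded PSD matrices (Gittens--Tropp, Theorem 4.1), with prefactor $(m-k+1)$ and range $R=L^2$, and then estimate the rate function via $(1+\varepsilon)\ln(1+\varepsilon)-\varepsilon\ge\varepsilon^2/4$ on $(0,1]$; the paper instead verifies the subexponential moment growth condition $\intrho{(\gf\gf^T)^p}\preceq \tfrac{p!}{2}(L^2)^{p-2}(L^2\mC)$ and applies the eigenvalue \emph{Bernstein} inequality (their Theorem 5.3, restated as Theorem \ref{thm:tropp}) with the compressed variance $\sigma_k^2=NL^2\lambda_k$. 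Both yield $(m-k+1)\exp(-N\lambda_k\varepsilon^2/(4L^2))$; your route needs only boundedness and no moment-growth verification, while the paper's route has the advantage of using a single theorem for both tails. For \eqref{eq:lower} your argument is essentially the paper's: the decisive inputs are the second-moment bound $\Exp{\|\gf\|^2\gf\gf^T}\preceq L^2\mC$ (the $p=2$ case of the paper's moment condition), the resulting variance scale $NL^2\lambda_1$, the deviation $\varepsilon N\lambda_k$, the prefactor $k$, and the check $\varepsilon\lambda_k/\lambda_1\le 1$ that keeps you on the subgaussian branch. The only presentational difference is that you cite a lower-tail Bernstein inequality directly, whereas the paper derives it by reflection, rewriting $\Prob{\hlambda_k\le\lambda_k-t}$ as an upper-tail event for $\lambda_{m-k+1}\bigl(\sum_j(-\gf_j\gf_j^T)\bigr)$ and compressing onto the top-$k$ eigenspace, which is what produces the $\lambda_1$ and the prefactor $k$; your global variance bound $\|\sum_j\Exp{\mX_j^2}\|\le NL^2\lambda_1$ coincides with that compressed variance here, so nothing is lost. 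One small correction to your commentary: the lower tail \emph{can} be obtained from the range alone --- the Gittens--Tropp Chernoff lower bound gives $k\exp(-N\lambda_k\varepsilon^2/(2L^2))$, which is in fact sharper than \eqref{eq:lower} since $\lambda_k\le\lambda_1$ --- so the $\lambda_k^2/\lambda_1$ exponent is an artifact of the Bernstein route rather than a necessity; this does not affect the validity of your proof of the stated bound.
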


The key to establishing Theorem~\ref{thm:prob} is a matrix Bernstein inequality from Theorem 5.3 in Gittens and Tropp~\cite{gittens2011tail}.  When we apply this concentration result, we set
\begin{equation}
\mX_j = \gf_j^{} \gf_j^T.
\end{equation}
Thus, each $\mX_j$ is an independent random sample of a matrix from the same distribution. Under this notion of randomness,
\begin{equation} 
\Exp{ \mX_j } = \intrho{\gf_j^{} \gf_j^T} = \intrho{\gf \gf^T} = \mC.
\end{equation}
For completeness, we restate Theorem 5.3 from~\cite{gittens2011tail}.

\begin{theorem}[Eigenvalue Bernstein Inequality for Subexponential Matrices, Theorem 5.3~\cite{gittens2011tail}] \label{thm:tropp}
 Consider a finite sequence $\{ \mX_j \}$ of independent, random, self-adjoint matrices with dimension $n$, all of which satisfy the subexponential moment growth condition 
 \[ \Exp{ \mX_j^m } \preceq \frac{m!}{2} B_{}^{m-2} \mSigma_{j}^2 \qquad \text{ for } m = 2, 3, 4, \ldots \]
 where $B$ is a positive constant and $\mSigma_j^2$ are positive-semidefinite matrices.  Given an integer $k \le n$, set 
 \[ \mu_k = \lambda_k \biggl( \sum_j \Exp{\mX_j} \biggr). \]
 Choose $\mV_+$ as an orthogonal matrix of size $n \times n-k+1$ that satisfies 
 \[ \mu_k = \lambda_{\max}\biggl( \sum_j \mV_+^T (\mathbb{E} \mX_j^{}) \mV_+^{}\biggr), \]
 and define 
 \[ \sigma_k^2 = \lambda_{\max}\biggl( \sum_j \mV_+^T \mSigma_j^2 \mV_+^{}\biggr). \]
 Then, for any $\tau \ge 0$, 
 \[ \Prob{ \lambda_k \biggl( \sum_j \mX_j \biggr) \ge \mu_k + \tau } \le 
    \begin{cases} (n - k + 1) \, \exp( - \tau^2/(4\sigma_k^2 )), & \tau \le \sigma_k^2/B,  \\ 
                  (n - k + 1) \, \exp( - \tau / (4B) ), & \tau \ge \sigma_k^2 / B.
    \end{cases}
 \]
\end{theorem}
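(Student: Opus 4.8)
The plan is to reduce the $k$th-eigenvalue statement to a largest-eigenvalue statement on a compression and then run the matrix Laplace-transform (Chernoff) method on that compression. Write $\mY=\sum_j\mX_j$, and let $\mV_+\in\mathbb{R}^{n\times(n-k+1)}$ be the deterministic matrix fixed in the hypothesis, i.e.\ the one spanning the bottom $n-k+1$ eigenvectors of $\sum_j\Exp{\mX_j}$, so that $\mu_k=\lambda_{\max}\big(\mV_+^T(\sum_j\Exp{\mX_j})\mV_+\big)$. By the Courant--Fischer minimax characterization (equivalently, Cauchy interlacing applied to a compression with $n-k+1$ columns), \emph{every} deterministic $\mV_+$ of this size satisfies $\lambda_k(\mY)\le\lambda_{\max}(\mV_+^T\mY\mV_+)$ pointwise. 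Since $\mV_+$ does not depend on the realization of $\mY$, this yields
\begin{equation*}
\Prob{\lambda_k(\mY)\ge\mu_k+\tau}\;\le\;\Prob{\lambda_{\max}\Big(\sum_j\mY_j\Big)\ge\mu_k+\tau},\qquad \mY_j:=\mV_+^T\mX_j\mV_+ ,
\end{equation*}
which trades an interior eigenvalue in dimension $n$ for a top eigenvalue of a sum of independent self-adjoint matrices in dimension $n-k+1$; choosing $\mV_+$ aligned with the mean is what makes $\mu_k$ the correct centering constant.

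Next I would apply the standard matrix Chernoff bound to the compressed sum: for any $\theta>0$, using $e^{\theta\lambda_{\max}(\cdot)}\le\trace{e^{\theta(\cdot)}}$ and Markov's inequality,
\begin{equation*}
\Prob{\lambda_{\max}\Big(\sum_j\mY_j\Big)\ge\mu_k+\tau}\;\le\;e^{-\theta(\mu_k+\tau)}\,\Exp{\trace{\exp\Big(\theta\sum_j\mY_j\Big)}}.
\end{equation*}
The expected trace of the matrix exponential of a sum of independent terms is then controlled by Tropp's master inequality (a consequence of Lieb's concavity theorem),
\begin{equation*}
\Exp{\trace{\exp\Big(\theta\sum_j\mY_j\Big)}}\;\le\;\trace{\exp\Big(\sum_j\log\Exp{e^{\theta\mY_j}}\Big)}.
\end{equation*}

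It remains to bound the matrix cumulant $\log\Exp{e^{\theta\mY_j}}$ from the subexponential moment hypothesis. Expanding the exponential and summing the resulting geometric series, the bound $\Exp{\mX_j^m}\preceq\frac{m!}{2}B^{m-2}\mSigma_j^2$ should give, for $0<\theta<1/B$, an estimate of the form $\Exp{e^{\theta\mY_j}}\preceq\mathbf{I}+\theta\,\mV_+^T\Exp{\mX_j}\mV_+ + g(\theta)\,\mV_+^T\mSigma_j^2\mV_+$ with $g(\theta)=\theta^2/\big(2(1-B\theta)\big)$; applying $\log(\mathbf{I}+\mathbf{A})\preceq\mathbf{A}$ then yields $\log\Exp{e^{\theta\mY_j}}\preceq\theta\,\mV_+^T\Exp{\mX_j}\mV_+ + g(\theta)\,\mV_+^T\mSigma_j^2\mV_+$. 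Summing over $j$, using monotonicity of $\mathbf{A}\mapsto\trace{e^{\mathbf{A}}}$ under $\preceq$, bounding the trace by $(n-k+1)\exp(\lambda_{\max}(\cdot))$, and invoking subadditivity of $\lambda_{\max}$ together with the definitions of $\mu_k$ and $\sigma_k^2$, I obtain
\begin{equation*}
\Prob{\lambda_k(\mY)\ge\mu_k+\tau}\;\le\;(n-k+1)\,\exp\!\big(-\theta\tau+g(\theta)\sigma_k^2\big).
\end{equation*}
Optimizing over $\theta\in(0,1/B)$ in the classical Bernstein fashion (e.g.\ $\theta=\tau/(\sigma_k^2+B\tau)$) and splitting on whether $B\tau\le\sigma_k^2$ or $B\tau\ge\sigma_k^2$ produces the two stated regimes, with the constant $4$ absorbing the slack in the optimization.

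The main obstacle is the cumulant step: transferring the subexponential moment growth from the original summands $\mX_j$ to the compressed summands $\mY_j=\mV_+^T\mX_j\mV_+$. The subtlety is that $(\mV_+^T\mX_j\mV_+)^m$ is generally \emph{not} dominated by $\mV_+^T\mX_j^m\mV_+$ in the semidefinite order for $m\ge 3$, since $t\mapsto t^m$ fails to be operator convex beyond $m=2$, so one cannot simply compress the moment hypothesis term by term. This is precisely the difficulty the minimax Laplace-transform method of Gittens and Tropp is engineered to handle---by arranging the reduction so that the subspace $\mV_+$ acts on the mean $\sum_j\Exp{\mX_j}$ and the variance proxy $\sum_j\mSigma_j^2$ rather than on each individual moment---and making that reduction rigorous is where the real work lies; the final optimization over $\theta$ is then routine Bernstein bookkeeping.
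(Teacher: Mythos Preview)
The paper does not prove this theorem at all: it is explicitly introduced as a restatement of Theorem~5.3 from Gittens and Tropp~\cite{gittens2011tail}, quoted ``for completeness'' as the black-box tool that drives the proof of Theorem~\ref{thm:prob}. So there is no ``paper's own proof'' to compare your attempt against.

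That said, your sketch is a faithful high-level outline of the minimax Laplace-transform method that Gittens and Tropp actually use, and you have correctly identified the one genuinely nontrivial step. The reduction via Courant--Fischer to a top-eigenvalue problem on the $(n-k+1)$-dimensional compression, followed by Markov plus Lieb/Tropp to control $\Exp{\trace{\exp(\theta\sum_j\mY_j)}}$, and the final Bernstein-style optimization over $\theta$, are all standard and essentially as you describe. Your candid observation at the end---that one cannot naively compress the moment hypothesis because $t\mapsto t^m$ is not operator monotone/convex for $m\ge 3$---is exactly the issue, and you are right that Gittens and Tropp engineer the argument so that the compression acts only on the \emph{mean} and the \emph{variance proxy} $\mSigma_j^2$, not on each moment separately. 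In their proof the mgf bound is obtained \emph{before} compressing: one shows $\log\Exp{e^{\theta\mX_j}}\preceq \theta\,\Exp{\mX_j}+g(\theta)\,\mSigma_j^2$ at the level of the full $n\times n$ matrices (via the series expansion and the moment hypothesis), and only then sandwiches by $\mV_+$, using that $\mA\preceq\mB$ implies $\mV_+^T\mA\mV_+\preceq\mV_+^T\mB\mV_+$. This sidesteps the operator-convexity obstruction you flagged. Had you stated this order of operations explicitly, your sketch would be essentially complete.
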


\begin{proof} \emph{(Theorem~\ref{thm:prob}.)}
We begin with the upper estimate \eqref{eq:upper}. First note that
\begin{equation}
\Prob{\lambda_k(\hmC) \geq \lambda_k(\mC) + t}
\;=\;
\Prob{\lambda_k\left(\sum_{j=1}^N \gf_j^{}\gf_j^T\right) \geq N\lambda_k + Nt}. 
\end{equation}
In this form we can apply Theorem \ref{thm:tropp}. We check that the bound on the gradient's norm implies that the matrix $\gf\gf^T$ satisfies the subexponential growth condition:
\begin{equation}
\begin{aligned}
\intrho{\left(\gf\gf^T\right)^p} 
&= \intrho{(\gf^T\gf)^{p-1}\,\gf\gf^T}\\
&\preceq \left(L^2\right)^{p-1}\,\intrho{\gf\gf^T}\\
&\preceq \frac{p!}{2}\,\left(L^2\right)^{p-2}\,(L^2\mC).
\end{aligned}
\end{equation}
Next we set 
\begin{equation}
\mu_k \;=\; \lambda_k\left(\sum_{j=1}^N \intrho{\gf_j^{} \gf_j^T } \right)
\;=\; N\lambda_k,
\end{equation}
where we simplified using the identically distributed samples of $\vx_j$. Choose $\mW_+=\mW(:,k:m)$ to be the last $m-k+1$ eigenvectors of $\mC$, and note that 
\begin{equation}
\lmax\left(\sum_{j=1}^N 
\mW_+^T\left(\intrho{\gf\gf^T}\right)\mW_+
\right) \;=\; N\lmax(\mW_+^T\mC\mW_+) \;=\; N\lambda_k \;=\;\mu_k,
\end{equation}
as required by Theorem~\ref{thm:tropp}. Define
\begin{equation}
\sigma^2_k \;=\;
\lmax\left(\sum_{j=1}^N \mW_+^T(L^2\mC)\mW_+\right) 
\;=\; NL^2\, \lmax\left(\mW_+^T\mC\mW_+\right)
\;=\; NL^2\lambda_k.
\end{equation}
With these quantities, Theorem~\ref{thm:tropp} states
\begin{equation}
\Prob{\lambda_k\left(\sum_{j=1}^N \gf_j^{} \gf_j^T \right) \geq N\lambda_k + Nt}
\;\leq\;
(m-k+1)\exp\left(\frac{-(Nt)^2}{4 \sigma_k^2}\right)
\end{equation}
when $Nt \leq \sigma_k^2/L^2$. Applying this theorem with $t=\varepsilon\lambda_k$, $\varepsilon\leq 1$, and the computed $\sigma_k^2=NL^2\lambda_k$ yields the upper estimate \eqref{eq:upper}.

For the lower estimate,
\begin{equation}
\begin{aligned}
& \Prob{\lambda_k(\hmC) \leq \lambda_k(\mC) - t} \\
& \qquad =
\Prob{-\lambda_k(\hmC) \geq -\lambda_k(\mC) + t} \\
& \qquad =
\Prob{-\lambda_k\left(\sum_{j=1}^N \gf_j^{}\gf_j^T\right) \geq -N\lambda_k(\mC) + Nt} \\
& \qquad =
\Prob{\lambda_{m-k+1}\left(\sum_{j=1}^N \left(-\gf_j^{}\gf_j^T\right)\right) \geq N\lambda_{m-k+1}(-\mC) + Nt} \\
& \qquad =
\Prob{\lambda_{k'}\left(\sum_{j=1}^N \left(-\gf_j^{}\gf_j^T\right)\right) \geq N\lambda_{k'}(-\mC) + Nt},
\end{aligned}
\end{equation}
for $k'=m-k+1$. We can now apply Theorem~\ref{thm:tropp} again. The subexponential growth condition is satisfied since
\begin{equation}
\intrho{\left(-\gf\gf^T\right)^p} \;\preceq\;
\intrho{\left(\gf\gf^T\right)^p} \;\preceq\;
\frac{p!}{2}\,\left(L^2\right)^{p-2}\,(L^2\mC).
\end{equation}
Set
\begin{equation}
\mu_{k'} \;=\; 
\lambda_{k'}\left(\sum_{j=1}^N \intrho{ (-\gf_j^{} \gf_j^{T}) } \right)
\;=\; N\lambda_{k'}(-\mC).
\end{equation}
Set $\mW_+=\mW(:,1:k)$ to be the first $k$ eigenvectors of $\mC$, and note that
\begin{equation}
\begin{aligned}
\lmax\left(\sum_{j=1}^N\mW_+^T\left(
\intrho{\left(-\gf_j^{}\gf_j^T\right)}
\right)\mW_+\right) 
&=
N\lmax\left(-\mW_+^T\mC\mW_+\right) \\
&= 
N(-\lambda_k(\mC)) \\
&= N\lambda_{m-k+1}(-\mC) \\
&= N\lambda_{k'}(-\mC),
\end{aligned}
\end{equation}
as required by Theorem \ref{thm:tropp}. Set
\begin{equation}
\sigma_{k'}^2 \;=\; 
\lmax\left(\sum_{j=1}^N \mW_+^T(L^2\mC)\mW_+\right)
\;=\; NL^2 \lmax\left(\mW_+^T\mC\mW_+\right)
\;=\; NL^2\lambda_1.
\end{equation}
Theorem \ref{thm:tropp} states
\begin{equation}
\Prob{\lambda_{k'}\left(\sum_{j=1}^N \left(-\gf_j^{}\gf_j^T\right)\right) \geq N\lambda_{k'}(-\mC) + Nt}
\;\le\; k\exp\left(\frac{-(Nt)^2}{4 \sigma_{k'}^2}\right)
\end{equation}
when $Nt \leq \sigma_{k'}^2/L^2$. Plug in the computed quantities with $t=-\varepsilon\lambda_{k'}(-\mC)=\varepsilon\lambda_k(\mC)$ to achieve the lower estimate \eqref{eq:lower}. Note that the condition $\varepsilon\leq 1\leq\lambda_1/\lambda_{k'}$ allows us to apply Theorem \ref{thm:tropp}. 
\end{proof}

Next we use this result to derive a lower bound on the number of gradient samples needed for relative accuracy of $\varepsilon$. Recall the definition of \emph{big omega} notation that $a=\Omega(b)$ means $a\geq cb$ for some positive constant $c$.

\begin{corollary}
\label{thm:N}
Let $\kappa_k=\lambda_1/\lambda_k$. Then for $\varepsilon\in(0,1]$, 
\begin{equation} 
\label{eq:eigN}
N \;=\; \Omega\left(\frac{L^2\kappa_k^2}{\lambda_1\varepsilon^2}\log(m)\right)
\end{equation}
implies $|\hlambda_k - \lambda_k| \leq \varepsilon\lambda_k$ with high probability.
\end{corollary}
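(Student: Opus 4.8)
The plan is to combine the two tail bounds from Theorem~\ref{thm:prob} with a union bound and then invert the resulting inequality to solve for the smallest $N$ that drives the total failure probability below a prescribed threshold $\delta$. The target event $|\hlambda_k - \lambda_k| \leq \varepsilon\lambda_k$ fails precisely when either $\hlambda_k \geq (1+\varepsilon)\lambda_k$ or $\hlambda_k \leq (1-\varepsilon)\lambda_k$, so its failure probability is at most the sum of the right-hand sides of \eqref{eq:upper} and \eqref{eq:lower}. Bounding both prefactors $m-k+1$ and $k$ crudely by $m$, I would write the failure probability as at most
\begin{equation}
m\exp\left(\frac{-N\lambda_k\varepsilon^2}{4L^2}\right) + m\exp\left(\frac{-N\lambda_k^2\varepsilon^2}{4\lambda_1L^2}\right).
\end{equation}

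The crucial observation is which of the two exponents is smaller in magnitude, since that term dominates the sum. The second exponent equals the first multiplied by $\lambda_k/\lambda_1 = 1/\kappa_k \leq 1$, so the lower-tail bound \eqref{eq:lower} decays more slowly and is therefore the binding constraint. It thus suffices to ensure that the second term is at most $\delta$; the first is then automatically smaller, and the total failure probability is at most $2\delta$. Requiring
\begin{equation}
m\exp\left(\frac{-N\lambda_k^2\varepsilon^2}{4\lambda_1L^2}\right) \leq \delta
\end{equation}
and taking logarithms yields $N \geq \frac{4\lambda_1 L^2}{\lambda_k^2\varepsilon^2}\log(m/\delta)$.

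The final step is a short algebraic rewriting. Substituting $\kappa_k = \lambda_1/\lambda_k$ converts the factor $\lambda_1/\lambda_k^2$ into $\kappa_k^2/\lambda_1$, so the requirement reads $N \geq \frac{4L^2\kappa_k^2}{\lambda_1\varepsilon^2}\log(m/\delta)$. For any fixed $\delta$, or more usefully any $\delta = m^{-c}$ decaying polynomially so that the failure probability tends to zero, we have $\log(m/\delta) = O(\log m)$, which absorbs into the big-omega and gives exactly \eqref{eq:eigN}.

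I do not expect a genuinely difficult step here: the argument is a routine inversion of a concentration inequality followed by a union bound. The only point demanding care is the recognition that it is the \emph{lower} tail, not the upper tail, that governs the sample complexity. This is where the extra factor of $\kappa_k$ enters—producing $\kappa_k^2$ rather than $\kappa_k$ in the final count—and it reflects the intuition that resolving a small eigenvalue $\lambda_k \ll \lambda_1$ accurately from below is intrinsically harder than bounding it from above.
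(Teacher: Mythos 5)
Your proposal is correct and follows essentially the same route as the paper: apply both tail bounds of Theorem~\ref{thm:prob}, observe that the lower-tail bound (with its extra factor of $\kappa_k$) is the binding one, and invert it to get $N \geq \frac{4L^2\kappa_k^2}{\lambda_1\varepsilon^2}\log(m/\delta)$ with $\delta$ polynomially small in $m$. The paper phrases this as choosing $N \geq (\beta+1)\frac{4L^2\kappa_k^2}{\lambda_1\varepsilon^2}\log(m)$ to make each tail probability at most $m^{-\beta}$, which is the same calculation.
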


\begin{proof}
Starting with the upper estimate from Theorem \ref{thm:prob}, if
\begin{equation}
N \;\geq\;
\frac{4L^2}{\lambda_k\varepsilon^2}(\beta+1)\log(m) 
\;\geq\;
\frac{4L^2}{\lambda_k\varepsilon^2}(\beta\log(m)+\log(m-k+1)),
\end{equation}
then 
\begin{equation}
\Prob{\hlambda_k \geq (1+\varepsilon)\lambda_k} \leq m^{-\beta}.
\end{equation}
Similarly for the lower estimate from Theorem \ref{thm:prob}, if 
\begin{equation}
N \;\geq\;
\frac{4L^2\lambda_1}{\lambda_k^2\varepsilon^2}(\beta+1)\log(m)
\;\geq\;
\frac{4L^2\lambda_1}{\lambda_k^2\varepsilon^2}(\beta\log(m)+\log(k)),
\end{equation}
then 
\begin{equation}
\Prob{\hlambda_k \leq (1-\varepsilon)\lambda_k} \leq m^{-\beta}.
\end{equation}
Setting $\kappa_k=\lambda_1/\lambda_k$ and taking
\begin{equation}
N \;\geq\; (\beta+1)\frac{4L^2\kappa_k^2}{\lambda_1\varepsilon^2}\log(m)
\end{equation}
satisfies both conditions. 
\end{proof}

We can combine results from Golub and Van Loan~\cite[Chapter 8]{golub1996matrix} with results from Tropp~\cite{tropp2012user} to obtain an estimate of the distance between the subspace defined by the eigenvectors $\mW_1$ and the subspace defined by the eigenvectors $\hmW_1$. This requires a different matrix Bernstein inequality in the form of Theorem 6.1 from Tropp~\cite{tropp2012user}, which we restate below. When we apply the theorem, $\mX_j = \gf_j^{} \gf_j^T - \mC$, that is, the random matrix samples are the deviation of the $j$th sampled gradient outer product from the matrix $\mC$. 

\begin{theorem}[Matrix Bernstein: bounded case, Theorem 6.1~\cite{tropp2012user}]
\label{thm:tropp-zero}
Consider a finite sequence $\{ \mX_j \}$ of independent, random, self-adjoint matrices with dimension $n$. Assume that 
\[ \Exp{ \mX_j } = 0 \quad \text{ and } \quad \lmax( \mX_j ) \le R \quad \text{almost surely}. \]
Compute the norm of the total variance,
 \[ \sigma^2 := \left \| \sum_{j} \Exp{ \mX_j^2 } \right\|. \]
Then the following inequality holds for all $\tau \ge 0$: 
\[ \Prob{ \lmax\biggl( \sum_{j} \mX_j \biggr) \ge \tau } 
	\le \begin{cases} 
			n \exp(-3\tau^2/(8\sigma^2)), & \tau \le \sigma^2 / R, \\
			n \exp(-3\tau/(8R)), & \tau > \sigma^2 / R.
		\end{cases} 
\]
\end{theorem}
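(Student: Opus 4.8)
The plan is to follow the matrix Laplace transform method of Ahlswede--Winter and Tropp, which converts a tail bound on $\lmax$ of a sum of independent matrices into a bound on a trace moment generating function (mgf). First I would invoke the master tail inequality: for any self-adjoint random matrix $\mY$ and any $\theta > 0$,
\begin{equation}
\Prob{\lmax(\mY) \ge \tau} \;\le\; e^{-\theta\tau}\,\Exp{\trace{e^{\theta\mY}}},
\end{equation}
which follows from Markov's inequality together with $\lmax(e^{\theta\mY}) = e^{\theta\lmax(\mY)} \le \trace{e^{\theta\mY}}$ (the exponential is positive semidefinite, so its top eigenvalue is dominated by its trace). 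Setting $\mY = \sum_j \mX_j$ reduces the problem to controlling the trace mgf of the sum.

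The crucial structural step is the subadditivity of the matrix cumulant generating function. Using Lieb's concavity theorem, one obtains
\begin{equation}
\Exp{\trace{\exp\left(\theta\sum_j \mX_j\right)}} \;\le\; \trace{\exp\left(\sum_j \log\Exp{e^{\theta\mX_j}}\right)}.
\end{equation}
This is the one deep ingredient, and I expect it to be the main obstacle: it rests on the nontrivial concavity of $\mA \mapsto \trace{\exp(\mH + \log\mA)}$, whereas everything else is comparatively routine. This is exactly the machinery already underlying Theorem~\ref{thm:tropp}, so I could simply cite it rather than reprove it; note also that a direct reduction from Theorem~\ref{thm:tropp} is \emph{not} clean here, because the present hypothesis controls only $\lmax(\mX_j)$ (one-sided) rather than all matrix moments.

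Next I would bound each matrix cumulant $\log\Exp{e^{\theta\mX_j}}$ using only the one-sided eigenvalue bound $\lmax(\mX_j) \le R$ and the mean-zero assumption. The scalar function $g(x) = (e^{\theta x} - \theta x - 1)/x^2$ is increasing, so $g(x) \le g(R)$ whenever $x \le R$; transferring this scalar inequality to the matrix argument (legitimate because it involves only the eigenvalues of $\mX_j$, all at most $R$) gives $e^{\theta\mX_j} \preceq \eye + \theta\mX_j + g(R)\mX_j^2$. Taking expectations, using $\Exp{\mX_j} = 0$, and applying the operator inequality $\log(\eye + \mA) \preceq \mA$ yields
\begin{equation}
\log\Exp{e^{\theta\mX_j}} \;\preceq\; g(R)\,\Exp{\mX_j^2},
\qquad g(R) = \frac{e^{\theta R} - \theta R - 1}{R^2} \;\le\; \frac{\theta^2/2}{1 - \theta R/3},
\end{equation}
where the final estimate holds for $0 < \theta < 3/R$ via the series bound $p! \ge 2\cdot 3^{p-2}$. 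Summing over $j$, using monotonicity of the trace exponential with respect to $\preceq$ and the bound $\trace{\exp(\mA)} \le n\,e^{\lmax(\mA)}$, the trace mgf is at most $n\exp(g(R)\sigma^2)$ with $\sigma^2 = \|\sum_j \Exp{\mX_j^2}\|$.

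Finally I would assemble these estimates into
\begin{equation}
\Prob{\lmax\left(\sum_j \mX_j\right) \ge \tau} \;\le\; \inf_{0 < \theta < 3/R} n\exp\left(-\theta\tau + \frac{\theta^2\sigma^2/2}{1 - \theta R/3}\right)
\end{equation}
and optimize. The choice $\theta = \tau/(\sigma^2 + R\tau/3)$ recovers the classical Bernstein bound $n\exp\!\big(-\tfrac{\tau^2/2}{\sigma^2 + R\tau/3}\big)$. To recover the stated two-branch form I would split on the sign of $\tau - \sigma^2/R$: when $\tau \le \sigma^2/R$ the variance term dominates, $\sigma^2 + R\tau/3 \le \tfrac43\sigma^2$, giving exponent $-3\tau^2/(8\sigma^2)$; when $\tau > \sigma^2/R$ the range term dominates, $\sigma^2 + R\tau/3 \le \tfrac43 R\tau$, giving exponent $-3\tau/(8R)$. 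This reproduces both cases exactly, completing the argument.
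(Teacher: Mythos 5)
This theorem is stated in the paper only as a quoted result (Theorem 6.1 of Tropp, 2012) and is not proved there, so there is no in-paper argument to compare against. Your proposal is a correct reconstruction of Tropp's own proof: the Markov/trace-mgf master bound, Lieb-based subadditivity of the matrix cumulant generating function, the one-sided cgf bound $\log\Exp{e^{\theta\mX_j}} \preceq g(R)\Exp{\mX_j^2}$ via the increasing function $(e^{\theta x}-\theta x-1)/x^2$, the choice $\theta = \tau/(\sigma^2+R\tau/3)$, and the two-branch simplification of $\sigma^2+R\tau/3$ by $\tfrac{4}{3}\sigma^2$ or $\tfrac{4}{3}R\tau$ are all exactly the steps in the cited source, and each is carried out correctly. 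Your remark that a direct reduction from Theorem~\ref{thm:tropp} is not available here (since only $\lmax(\mX_j)$ is controlled, not all moments) is also accurate.
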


\begin{theorem}
\label{thm:maxeig}
Let $\varepsilon>0$. Assume $\|\gf\|\leq L$ for all $\vx\in\sX$. Define the variance
\begin{equation}
\label{eq:nu}
\nu^2 \;=\; \left\| \int \left(\nabla_\vx f\,\nabla_\vx f^T -\mC\right)^2\,\rho\,dx \right\|,
\end{equation}
and assume $\nu^2>0$. Then
\begin{equation}
\label{eq:2normC}
\Prob{\|\hmC-\mC\| \geq \varepsilon\|\mC\|} 
\;\leq\; \left\{
\begin{array}{ll}
2 m\,\exp\left( \frac{-3N\lambda_1^2\varepsilon^2}{8\nu^2} \right),
& \mbox{ if $\varepsilon\leq \nu^2/(\lambda_1L^2)$,} \\
2 m\,\exp\left( \frac{-3N\lambda_1\varepsilon}{8L^2} \right),
& \mbox{ if $\varepsilon > \nu^2/(\lambda_1L^2)$.}
\end{array}
\right.
\end{equation}
\end{theorem}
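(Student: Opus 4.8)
The plan is to invoke the bounded matrix Bernstein inequality (Theorem~\ref{thm:tropp-zero}) with the centered summands $\mX_j = \gf_j^{}\gf_j^T - \mC$, exactly as the surrounding text suggests. Writing
\begin{equation}
\hmC - \mC \;=\; \frac{1}{N}\sum_{j=1}^N\left(\gf_j^{}\gf_j^T - \mC\right) \;=\; \frac{1}{N}\sum_{j=1}^N \mX_j,
\end{equation}
we immediately get $\Exp{\mX_j}=\mC-\mC=0$, so the zero-mean hypothesis holds, and the $\mX_j$ are independent and identically distributed. Since $\mC$ is positive semidefinite, $\|\mC\|=\lambda_1$, so the target event $\|\hmC-\mC\|\geq\varepsilon\|\mC\|$ is identical to $\|\sum_j\mX_j\|\geq N\varepsilon\lambda_1$.

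The first preparatory step is to read off the two parameters of Theorem~\ref{thm:tropp-zero}. For the almost-sure bound I would test against a unit vector $\vv$: because $\mC$ is positive semidefinite,
\begin{equation}
\vv^T\mX_j\vv \;=\; (\vv^T\gf_j^{})^2 - \vv^T\mC\vv \;\leq\; \|\gf_j\|^2 \;\leq\; L^2,
\end{equation}
so $\lmax(\mX_j)\leq L^2$ and I may take $R=L^2$. For the variance I would exploit that the samples are identically distributed, giving $\sum_j\Exp{\mX_j^2}=N\int(\gf\gf^T-\mC)^2\rho\,d\vx$, whose norm is exactly $N\nu^2$ by the definition \eqref{eq:nu}; hence $\sigma^2=N\nu^2$.

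The one genuine subtlety --- and the step I expect to need the most care --- is that Theorem~\ref{thm:tropp-zero} controls only $\lmax$, whereas the operator norm of the symmetric matrix $\sum_j\mX_j$ is $\max(\lmax,-\lmin)$. I would therefore apply the inequality twice, once to $\{\mX_j\}$ and once to $\{-\mX_j\}$, and take a union bound; this is the origin of the factor $2m$ in \eqref{eq:2normC}. The symmetry that keeps this clean is that both families share the same constants: $(-\mX_j)^2=\mX_j^2$ gives the identical variance $N\nu^2$, while dropping the nonnegative term $(\vv^T\gf_j^{})^2$ together with the elementary estimate $\lambda_1\leq L^2$ gives $\lmax(-\mX_j)\leq\lambda_1\leq L^2$, so the same $R=L^2$ is valid for the reflected family.

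Finally I would set the deviation $\tau=N\varepsilon\lambda_1$ and read off the two regimes. The threshold $\tau\leq\sigma^2/R$ becomes $N\varepsilon\lambda_1\leq N\nu^2/L^2$, i.e.\ $\varepsilon\leq\nu^2/(\lambda_1 L^2)$, which is precisely the case split in the statement. Substituting $\sigma^2=N\nu^2$ and $R=L^2$ into the two branches of Theorem~\ref{thm:tropp-zero}, multiplying by the union-bound factor of $2$, and simplifying the exponents then yields the two claimed bounds.
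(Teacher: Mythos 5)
Your proposal is correct and follows essentially the same route as the paper's proof: the same centering $\mX_j=\gf_j^{}\gf_j^T-\mC$, the same two-sided application of Theorem~\ref{thm:tropp-zero} with a union bound giving the factor $2m$, the same constants $R=L^2$ (using positive semidefiniteness of $\mC$ and of $\gf\gf^T$ for the two directions, plus $\|\mC\|\leq L^2$) and $\sigma^2=N\nu^2$, and the same choice $\tau=N\varepsilon\lambda_1$ to produce the case split. No gaps.
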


\begin{proof}
Observe that 
\begin{equation}
\label{eq:probC}
\begin{aligned}
\Prob{\|\hmC-\mC\| \geq t } 
  & = \Prob{ \lmax ( \hmC - \mC ) \geq t  \text{ or }  \lmax ( \mC - \hmC ) \geq t } \\
  & \le \Prob{ \lmax ( \hmC - \mC ) \geq t } + \Prob{ \lmax ( \mC - \hmC ) \geq t } \\
  & = \Prob{ \lmax\left( \sum_{j=1}^N \left(\gf_j^{}\gf_j^T-\mC \right) \right) \geq Nt} \\
  & \qquad + \Prob{ \lmax\left( \sum_{j=1}^N \left(\mC - \gf_j^{}\gf_j^T \right) \right) \geq Nt}.
\end{aligned}  
\end{equation} 
Note that both
\begin{equation}
\intrho{\left(\gf\gf^T - \mC\right)} \;=\; \intrho{\left(\mC - \gf\gf^T \right)}
\;=\; \mathbf{0}.
\end{equation}
Since $\mC$ is positive semidefinite and $\|{\gf}\| \le L$,
\begin{equation}
\begin{aligned}
\lmax(\gf \gf^T - \mC) 
&= \underset{\|\vv\|=1}{\operatorname{max}}\; \vv^T\left(\gf\gf^T - \mC\right)\vv \\
& \leq \underset{\|\vv\|=1}{\operatorname{max}}\; \vv^T\left(\gf\gf^T\right)\vv \;\leq\;
L^2.
\end{aligned}
\end{equation}
This bound also holds for $\lmax(\mC - \gf \gf^T)$, since
\begin{equation}
\begin{aligned}
\lmax(\mC - \gf \gf^T) 
&= \underset{\|\vv\|=1}{\operatorname{max}}\; \vv^T\left(\mC - \gf\gf^T\right)\vv \\
& \leq \underset{\|\vv\|=1}{\operatorname{max}}\; \vv^T\,\mC\,\vv \;\leq\; \|\mC\| \;\leq\;
L^2.
\end{aligned}
\end{equation}
Thus, the upper-bound $R$ in Theorem \ref{thm:tropp-zero} is $L^2$. The variance parameter $\sigma^2$ is
\begin{equation}
\sigma^2 
 = \left\| \left( \sum_{j=1}^N \intrho{ (\gf_j \gf_j^T - \mC)^2 } \right) \right\| 
 = N \left\| \intrho{ (\gf \gf^T - \mC)^2 } \right\| 
 = N\,\nu^2.
\end{equation}             
Assume $\varepsilon\leq \nu^2/(\lambda_1L^2)$. Then $N\lambda_1\varepsilon\leq N\nu^2/L^2$, and we can apply the upper branch of Theorem \ref{thm:tropp-zero} to the two terms at the end of \eqref{eq:probC} with $\tau=\lambda_1\varepsilon=\|\mC\|\varepsilon$, which produces the bound's upper branch in \eqref{eq:2normC}. 
Now assume $\varepsilon >\nu^2/(\lambda_1L^2)$. Similarly, $N\lambda_1\varepsilon\geq N\nu^2/L^2$, and we can apply the lower branch of Theorem \ref{thm:tropp-zero} with the same $t=\lambda_1\varepsilon$ to \eqref{eq:probC} to produce the lower branch of \eqref{eq:2normC}.
\end{proof}




This result leads to a lower bound on the number of samples needed for a small relative error in $\hmC$ in the matrix 2-norm; compare the following corollary to Corollary \ref{thm:N}. 

\begin{corollary}
\label{cor:maxeigN}
Let $\varepsilon>0$, and define
\begin{equation}
\label{eq:delta}
\delta \;=\; \max\,\left(\frac{\nu^2}{\lambda_1\varepsilon},\, L^2\right).
\end{equation}
Then
\begin{equation}
\label{eq:maxeigN}
N \;=\; \Omega\left(\frac{\delta}{\lambda_1\varepsilon}\,\log(2m)\right)
\end{equation}
implies that $\|\hmC-\mC\|\leq\varepsilon\|\mC\|$ with high probability.
\end{corollary}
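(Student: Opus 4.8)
The plan is to mirror the proof of Corollary \ref{thm:N}: start from the two-branch tail bound of Theorem \ref{thm:maxeig}, demand that the right-hand side be at most $(2m)^{-\beta}$ for some fixed $\beta>0$ (which is what ``with high probability'' means), and solve each resulting inequality for $N$. The only genuinely new ingredient is to verify that the two sample-size requirements produced by the two branches collapse into the single expression $N=\Omega\!\left(\frac{\delta}{\lambda_1\varepsilon}\log(2m)\right)$ once $\delta$ is defined as in \eqref{eq:delta}.

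First I would handle the small-$\varepsilon$ branch $\varepsilon\leq\nu^2/(\lambda_1L^2)$, where the tail from Theorem \ref{thm:maxeig} is $2m\exp(-3N\lambda_1^2\varepsilon^2/(8\nu^2))$. Requiring this to be at most $(2m)^{-\beta}$ and taking logarithms gives
\begin{equation}
N \;\geq\; (\beta+1)\,\tfrac{8}{3}\,\frac{\nu^2}{\lambda_1^2\varepsilon^2}\,\log(2m).
\end{equation}
In the complementary branch $\varepsilon>\nu^2/(\lambda_1L^2)$ the tail is $2m\exp(-3N\lambda_1\varepsilon/(8L^2))$, and the identical bookkeeping yields
\begin{equation}
N \;\geq\; (\beta+1)\,\tfrac{8}{3}\,\frac{L^2}{\lambda_1\varepsilon}\,\log(2m).
\end{equation}

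The key observation---the analogue of introducing $\kappa_k=\lambda_1/\lambda_k$ in Corollary \ref{thm:N}---is that each branch condition coincides exactly with which term attains the maximum in \eqref{eq:delta}. Indeed, $\varepsilon\leq\nu^2/(\lambda_1L^2)$ is equivalent to $L^2\leq\nu^2/(\lambda_1\varepsilon)$, so in the first branch $\delta=\nu^2/(\lambda_1\varepsilon)$ and the requirement becomes $N\geq(\beta+1)\tfrac{8}{3}\frac{\delta}{\lambda_1\varepsilon}\log(2m)$; likewise $\varepsilon>\nu^2/(\lambda_1L^2)$ forces $\delta=L^2$, and the second requirement reads $N\geq(\beta+1)\tfrac{8}{3}\frac{\delta}{\lambda_1\varepsilon}\log(2m)$ as well. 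Hence a single $N=\Omega\!\left(\frac{\delta}{\lambda_1\varepsilon}\log(2m)\right)$ meets whichever branch is active and drives $\Prob{\|\hmC-\mC\|\geq\varepsilon\|\mC\|}$ below $(2m)^{-\beta}$, which is the claim.

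I do not expect any real obstacle in the calculation; the one thing to get right is this branch/$\delta$ correspondence. A careless reading might lead one to combine the two sample counts with a maximum or a sum over the cases, whereas the definition of $\delta$ in \eqref{eq:delta} already selects the correct quantity for the active branch, so the single big-omega expression is exactly tight to the piecewise bound rather than a loose over-estimate.
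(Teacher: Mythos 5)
Your proposal is correct and follows essentially the same route as the paper: apply the two branches of Theorem~\ref{thm:maxeig}, solve each for $N$ to get the lower bounds $\tfrac{8}{3}(\beta+1)\tfrac{\nu^2}{\lambda_1^2\varepsilon^2}\log(2m)$ and $\tfrac{8}{3}(\beta+1)\tfrac{L^2}{\lambda_1\varepsilon}\log(2m)$, and observe that the definition of $\delta$ in \eqref{eq:delta} unifies them into the single expression \eqref{eq:maxeigN}. Your explicit remark that the branch condition $\varepsilon\leq\nu^2/(\lambda_1L^2)$ coincides exactly with which term attains the maximum in $\delta$ is a slightly more careful statement of what the paper compresses into ``chooses the larger lower bound,'' but it is the same argument.
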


\begin{proof}
We consider the two cases of $\varepsilon$ from Theorem \ref{thm:maxeig}. Assume $\varepsilon\leq \nu^2/(\lambda_1L^2)$, so that $N\lambda_1\varepsilon\leq N\nu^2/L^2$, we follow the reasoning in the proof of Corollary \ref{thm:N} with the upper branch of the bound in Theorem \ref{thm:maxeig} to get
\begin{equation}
\label{eq:lb1}
N\;\geq\; \frac{8}{3} (\beta+1)\frac{\nu^2}{\lambda_1^2\varepsilon^2}\log(2m).
\end{equation}
Similarly, if $\varepsilon >\nu^2/(\lambda_1L^2)$, then the lower branch from the bound in Theorem \ref{thm:maxeig} produces
\begin{equation}
\label{eq:lb2}
N\;\geq\; \frac{8}{3} (\beta+1)\frac{L^2}{\lambda_1\varepsilon}\log(2m).
\end{equation} 
Using $\delta$ from \eqref{eq:delta} chooses the larger lower bound between \eqref{eq:lb1} and \eqref{eq:lb2}.
\end{proof}

We can combine Corollary \ref{cor:maxeigN} with~\cite[Corollary 8.1.11]{golub1996matrix} to control the error in the estimated subspace defined by $\hmW_1$. We quantify this error by the distance between the subspace defined by $\mW_1$ and the subspace defined by $\hmW_1$. Recall the definition of the distance between subspaces~\cite{stewart1973},
\begin{equation}
\dist{\mW_1}{\hmW_1} \;=\;
\|\mW_1\mW_1^T - \hmW_1\hmW_1^T\| \;=\;
\|\mW_1^T\hmW_2\|.
\end{equation}

\begin{corollary}
\label{cor:evecs}
Let $\varepsilon>0$ be such that
\begin{equation}
\label{eq:epsbnd}
\varepsilon \;\leq\; \min\,
\left(1,\,(\lambda_{n}-\lambda_{n+1})/(5\lambda_1)\right),
\end{equation}
and choose $N$ according to Corollary \ref{cor:maxeigN}. Then
\begin{equation}
\dist{\mW_1}{\hmW_1} \;\leq\; \frac{4\lambda_1\varepsilon}{\lambda_n-\lambda_{n+1}},
\end{equation}
with high probability.
\end{corollary}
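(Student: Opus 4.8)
The plan is to read this as a deterministic invariant-subspace perturbation bound applied on the high-probability event furnished by Corollary~\ref{cor:maxeigN}, using the symmetric perturbation result \cite[Corollary 8.1.11]{golub1996matrix}. First I would set $\mE = \hmC - \mC$, so that $\hmC = \mC + \mE$ is a symmetric perturbation of $\mC$. The splitting \eqref{eq:splitting} exhibits $\operatorname{ran}(\mW_1)$ as the invariant subspace of $\mC$ associated with the leading eigenvalues $\lambda_1 \geq \cdots \geq \lambda_n$, separated from the trailing eigenvalues $\lambda_{n+1} \geq \cdots \geq \lambda_m$ by the spectral gap $d = \lambda_n - \lambda_{n+1}$, which is positive because \eqref{eq:epsbnd} forces $(\lambda_n - \lambda_{n+1})/(5\lambda_1) > 0$.

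The cited corollary asserts that when the perturbation is small relative to the gap, namely $\|\mE\| \leq d/5$, the estimated invariant subspace $\operatorname{ran}(\hmW_1)$ stays close to $\operatorname{ran}(\mW_1)$, with $\dist{\mW_1}{\hmW_1} \leq (4/d)\|\mE\|$. The argument therefore splits into two pieces: controlling $\|\mE\|$ and verifying the smallness hypothesis. For the magnitude, $\mC$ is symmetric positive semidefinite so $\|\mC\| = \lambda_1$, and selecting $N$ as in Corollary~\ref{cor:maxeigN} ensures $\|\mE\| = \|\hmC - \mC\| \leq \varepsilon\|\mC\| = \varepsilon\lambda_1$ with high probability.

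For the hypothesis, it suffices that $\varepsilon\lambda_1 \leq d/5$, i.e. $\varepsilon \leq (\lambda_n - \lambda_{n+1})/(5\lambda_1)$, which is precisely the binding term of the minimum in \eqref{eq:epsbnd} (the clause $\varepsilon \leq 1$ only keeps the relative-error regime of Corollary~\ref{cor:maxeigN} meaningful). On the event $\{\|\mE\| \leq \varepsilon\lambda_1\}$ the corollary then delivers
\[
\dist{\mW_1}{\hmW_1} \;\leq\; \frac{4}{d}\,\|\mE\| \;\leq\; \frac{4\varepsilon\lambda_1}{\lambda_n - \lambda_{n+1}},
\]
the asserted bound, and since that event has high probability the conclusion holds with high probability.

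I do not expect a genuine obstacle; the work is in aligning constants and norms with the exact statement of \cite[Corollary 8.1.11]{golub1996matrix}. The two points deserving care are: confirming $\|\mC\| = \lambda_1$ so that the relative error $\varepsilon\|\mC\|$ becomes $\varepsilon\lambda_1$ and the constants $5$ and $4$ in \eqref{eq:epsbnd} and the conclusion land exactly as in the cited result; and tracing the step inside that corollary where the perturbed subspace is parametrized by a matrix $P$ with $\|P\| \leq (4/d)\|\mE_{21}\|$ (with $\mE_{21}$ the off-diagonal block of $\mE$ in the eigenbasis of $\mC$), so that $\|\mE_{21}\| \leq \|\mE\|$ together with the fact that the subspace distance is bounded by $\|P\|$ yield the clean $(4/d)\|\mE\|$ estimate.
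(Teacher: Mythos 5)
Your proposal is correct and follows essentially the same route as the paper's proof: set $\mE=\hmC-\mC$, use Corollary \ref{cor:maxeigN} to get $\|\mE\|\leq\varepsilon\lambda_1\leq(\lambda_n-\lambda_{n+1})/5$ with high probability, and invoke \cite[Corollary 8.1.11]{golub1996matrix}. No gaps.
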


\begin{proof}
Let $\mE = \hmC-\mC$. For $\varepsilon$ in \eqref{eq:epsbnd} with $N$ chosen according to Corollary \ref{cor:maxeigN}, we have
\begin{equation}
\|\mE\|\;\leq\; \varepsilon\|\mC\| \;=\; \varepsilon\lambda_1
\;\leq\; (\lambda_n-\lambda_{n+1})/5, 
\end{equation}
with high probability. Under this condition on $\|\mE\|$, \cite[Corollary 8.1.11]{golub1996matrix} states
\begin{equation}
\dist{\mW_1}{\hmW_1} 
\;\leq\; 
\frac{4\|\mE\|}{\lambda_n-\lambda_{n+1}}
\;\leq\; 
\frac{4\lambda_1\varepsilon}{\lambda_n-\lambda_{n+1}},
\end{equation}
as required.
\end{proof}

Corollary \ref{cor:evecs} shows that control of the eigenvalues implies control of the subspace generated by the eigenvectors. However, the error in the estimated subspace is inversely proportional to the gap between the smallest eigenvalue associated with the active subspace and the largest eigenvalue associated with the inactive subspace. This implies, for example, if the gap between the second and third eigenvalues is larger than the gap between the first and second, then estimates of a two-dimensional active subspace are more accurate than estimates of a one-dimensional active subspace. 


\subsection{Approximate gradients}
\label{sec:approxgrad}
Many modern simulations have subroutines for estimating gradients with, e.g., adjoint methods~\cite{Bryson75,Borzi2012} or algorithmic differentiation~\cite{Griewank00}. However, legacy codes or simulations that couple multiple codes might not have such gradient capabilities. When there is no subroutine for gradients, finite difference approximations may suffice when $m$ is not too large and $f$ is neither too expensive nor too noisy. Recent work characterizes the gradient when function evaluations contain noise~\cite{More2011,More2014}. 

Next, we extend the bounds on errors in the estimated eigenpairs to the case when the gradients are computed with some error. The gradient error model we analyze depends on a parameter that controls the amount of error. Let $\vg(\vx)$ denote the approximate gradient computed at $\vx\in\sX$. We assume that 
\begin{equation}
\label{eq:fdbnd}
\|\vg(\vx) - \gf(\vx)\| \;\leq\; \sqrt{m}\gamma_h,\quad \vx\in\sX,
\end{equation}
where
\begin{equation}
\lim_{h\rightarrow 0} \gamma_h \;=\; 0.
\end{equation}
The parameter $h$ may be a finite difference parameter, the grid spacing in a continuous adjoint computation, or the solver tolerance for a discrete adjoint computation.

Define the symmetric positive semidefinite matrix $\mG$ and its eigenvalue decomposition
\begin{equation}
\label{eq:g}
\mG \;=\; \intrho{\vg\,\vg^T} \;=\;\mU\Theta\mU^T,\quad
\Theta=\text{diag}\,(\theta_1,\dots,\theta_m),
\end{equation}
and define its random sample approximation
\begin{equation}
\label{eq:ghat}
\hmG \;=\; \frac{1}{N}\sum_{j=1}^N\vg_j\,\vg_j^T \;=\;\hmU\hTheta\hmU^T,\quad
\hTheta=\text{diag}\,(\htheta_1,\dots,\htheta_m),
\end{equation}
where $\vg_j=\vg(\vx_j)$ for $\vx_j$ drawn independently from $\rho$. With these quantities defined, we have the following lemma.

\begin{lemma}
\label{lem:2norm}
Let $\|\nabla_\vx f\|\leq L$ for all $\vx\in\sX$. The norm of the difference between $\hmC$ and $\hmG$ is bounded by
\begin{equation}
\|\hmC - \hmG\| \;\leq\; (\sqrt{m}\gamma_h+2L)\sqrt{m}\gamma_h.
\end{equation}
\end{lemma}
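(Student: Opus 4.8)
The plan is to reduce the matrix bound to a sum of rank-one deviations and then bound each summand separately. Since both $\hmC$ and $\hmG$ are sample averages over the \emph{same} points $\vx_j$, I would first write their difference as
\begin{equation}
\hmC - \hmG \;=\; \frac{1}{N}\sum_{j=1}^N\left(\gf_j^{}\gf_j^T - \vg_j^{}\vg_j^T\right),
\end{equation}
and then apply the triangle inequality together with homogeneity of the operator norm to obtain $\|\hmC-\hmG\|\leq \frac{1}{N}\sum_{j=1}^N \|\gf_j^{}\gf_j^T - \vg_j^{}\vg_j^T\|$. It therefore suffices to bound a single generic term $\|\gf\gf^T - \vg\vg^T\|$ uniformly in $\vx\in\sX$, since the resulting bound will be independent of $j$ and hence survive the averaging.

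For the single-term bound, the key step is the telescoping identity
\begin{equation}
\gf\gf^T - \vg\vg^T \;=\; \gf(\gf-\vg)^T + (\gf-\vg)\vg^T,
\end{equation}
which splits the quadratic difference into two rank-one pieces, each carrying the gradient-error factor $(\gf-\vg)$. Using the fact that $\|\vu\vw^T\| = \|\vu\|\,\|\vw\|$ for rank-one matrices, followed by the triangle inequality, yields
\begin{equation}
\|\gf\gf^T - \vg\vg^T\| \;\leq\; \|\gf\|\,\|\gf-\vg\| + \|\gf-\vg\|\,\|\vg\|.
\end{equation}

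The final step is to insert the hypotheses. The uniform bound $\|\gf\|\leq L$ and the error model \eqref{eq:fdbnd} control the first factor in each product directly, while I would bound the remaining factor $\|\vg\|$ by the triangle inequality as $\|\vg\|\leq\|\gf\|+\|\gf-\vg\|\leq L+\sqrt{m}\gamma_h$. Collecting terms then gives $\sqrt{m}\gamma_h\,L + \sqrt{m}\gamma_h\,(L+\sqrt{m}\gamma_h) = (\sqrt{m}\gamma_h+2L)\sqrt{m}\gamma_h$, which is exactly the claimed bound. The entire argument is deterministic and pointwise, so no concentration machinery is needed here; the only mild subtlety is remembering to bound $\|\vg\|$ through the known quantities rather than treating it as given. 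I do not anticipate any genuine obstacle in carrying this out.
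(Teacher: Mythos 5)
Your proposal is correct and follows essentially the same route as the paper: reduce to a per-sample bound via the triangle inequality, split the rank-one difference (you use the telescoping $\gf(\gf-\vg)^T+(\gf-\vg)\vg^T$ where the paper uses the symmetrized $\tfrac{1}{2}[(\vg+\gf)(\vg-\gf)^T+(\vg-\gf)(\vg+\gf)^T]$), and insert $\|\gf\|\leq L$ and $\|\vg-\gf\|\leq\sqrt{m}\gamma_h$. Both decompositions yield the identical final bound $(\sqrt{m}\gamma_h+2L)\sqrt{m}\gamma_h$, so the difference is purely cosmetic.
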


\begin{proof}
Let $\vg=\vg(\vx)$ and $\gf=\gf(\vx)$. First observe
\begin{equation}
\|\vg+\gf\| = 
\|\vg-\gf + 2\gf\| \leq
\|\vg-\gf\| + 2\|\gf\| \leq
\sqrt{m}\gamma_h + 2L.
\end{equation}
Next,
\begin{equation}
\begin{aligned}
\|\vg\,\vg^T-\gf\gf^T\|
&= \frac{1}{2} \|(\vg+\gf)(\vg-\gf)^T+(\vg-\gf)(\vg+\gf)^T\|\\
&\leq \|(\vg+\gf)(\vg-\gf)^T\|\\
&\leq (\sqrt{m}\gamma_h +2L)\sqrt{m}\gamma_h.
\end{aligned}
\end{equation}
Then,
\begin{equation}
\begin{aligned}
\|\hmG-\hmC\|
&= \left\|\frac{1}{N}\sum_{j=1}^N \vg_j\vg_j^T - \frac{1}{N}\sum_{j=1}^N \gf_j^{}\gf_j^T\right\|\\
&\leq \frac{1}{N} \sum_{j=1}^N \|\vg_j\vg_j^T-\gf_j^{}\gf_j^T\|\\
&\leq \sqrt{m}\gamma_h(\sqrt{m}\gamma_h + 2L). 
\end{aligned}
\end{equation}
\end{proof}

We combine Lemma \ref{lem:2norm} with Corollary \ref{thm:N} to study the error in the eigenvalues of the random sample estimate with approximate gradients. 

\begin{theorem}
For $\varepsilon\in(0,1]$, if $N$ is chosen as \eqref{eq:eigN}, then the difference between $\lambda_k$ in \eqref{eq:eigdecomp} and the eigenvalue $\htheta_k$ from \eqref{eq:ghat} is bounded as
\begin{equation}
|\lambda_k - \htheta_k| \;\leq\; \varepsilon\lambda_k +
\sqrt{m}\gamma_h(\sqrt{m}\gamma_h + 2L),
\end{equation}
with high probability. 
\end{theorem}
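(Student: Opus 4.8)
The plan is to bound $|\lambda_k - \htheta_k|$ by inserting the intermediate quantity $\hlambda_k$, the $k$th eigenvalue of $\hmC$, and controlling the two resulting pieces separately. By the triangle inequality,
\begin{equation}
|\lambda_k - \htheta_k| \;\leq\; |\lambda_k - \hlambda_k| + |\hlambda_k - \htheta_k|.
\end{equation}
The first term is exactly what Corollary~\ref{thm:N} controls: since $N$ is chosen according to \eqref{eq:eigN}, we have $|\lambda_k - \hlambda_k| \leq \varepsilon\lambda_k$ with high probability. This accounts for the Monte Carlo sampling error in the exact-gradient matrix $\hmC$ and delivers the first summand in the stated bound.

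For the second term, the idea is to use Weyl's inequality (eigenvalue perturbation), which asserts that for symmetric matrices $\hmC$ and $\hmG$ one has $|\lambda_k(\hmC) - \lambda_k(\hmG)| \leq \|\hmC - \hmG\|$. Here $\hlambda_k = \lambda_k(\hmC)$ and $\htheta_k = \lambda_k(\hmG)$, so Weyl gives $|\hlambda_k - \htheta_k| \leq \|\hmC - \hmG\|$. Lemma~\ref{lem:2norm} then bounds this deterministically by $\sqrt{m}\gamma_h(\sqrt{m}\gamma_h + 2L)$, which is precisely the second summand. Combining the two bounds yields the claimed inequality.

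The main subtlety is keeping the probabilistic and deterministic parts cleanly separated. The bound $|\hlambda_k - \htheta_k| \leq \|\hmC - \hmG\|$ holds pointwise (for every draw of the samples $\vx_j$), since $\hmC$ and $\hmG$ are built from the \emph{same} samples $\vx_j$ and Lemma~\ref{lem:2norm} is an almost-sure statement given the gradient error model \eqref{eq:fdbnd}. The only randomness — and hence the only ``high probability'' qualifier — enters through Corollary~\ref{thm:N} bounding $|\lambda_k - \hlambda_k|$. Thus no union bound or independence argument is needed for the perturbation piece; the two estimates simply add. I expect no real obstacle here beyond invoking Weyl's inequality correctly and confirming that the eigenvalue orderings used by Corollary~\ref{thm:N} (largest to smallest) match those of $\hmG$ in \eqref{eq:ghat}, so that $\htheta_k$ and $\hlambda_k$ refer to the same index $k$.
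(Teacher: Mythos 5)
Your proposal is correct and follows essentially the same route as the paper: the triangle inequality through $\hlambda_k$, Corollary~\ref{thm:N} for the sampling term, and the standard symmetric eigenvalue perturbation bound (which the paper cites as~\cite[Corollary 8.1.6]{golub1996matrix}, i.e., Weyl's inequality) combined with Lemma~\ref{lem:2norm} for the gradient-error term. Your remark that the perturbation piece is deterministic and requires no union bound is a correct and slightly more explicit accounting than the paper gives.
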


\begin{proof}
Observe that
\begin{equation}
|\lambda_k - \htheta_k| \leq |\lambda_k-\hlambda_k| + |\hlambda_k - \htheta_k|.
\end{equation}
Apply Corollary \ref{thm:N} to the first term. The second term follows from~\cite[Corollary 8.1.6]{golub1996matrix} combined with Lemma \ref{lem:2norm}, since
\begin{equation}
|\htheta_k-\hlambda_k| =
|\lambda_k(\hmG) - \lambda_k(\hmC)| \leq
\|\hmG-\hmC\| \leq
\sqrt{m}\gamma_h(\sqrt{m}\gamma_h + 2L).
\end{equation}
\end{proof}

The bias in the finite sample eigenvalue estimates using approximate gradients goes to zero at the same rate as the error in the approximate gradient. Next we attend to the error in the active subspace computed with Monte Carlo and approximate gradients. 

\begin{theorem}
\label{thm:subspaceer}
Choose $\varepsilon>0$ such that
\begin{equation}
\varepsilon \;<\; \min\left(
1, \; \frac{\lambda_n-\lambda_{n+1}}{5\lambda_1}, \; 
\frac{\lambda_n - \lambda_{n+1}}{\lambda_{n}+\lambda_{n+1}}
\right)
\end{equation}
Choose $N$ so that it satisfies both \eqref{eq:maxeigN} and \eqref{eq:eigN} with $k=n+1$. Choose $h$ small enough so that
\begin{equation}
\label{eq:hcond}
\sqrt{m}\gamma_h(\sqrt{m}\gamma_h + 2L) \leq \frac{(1-\varepsilon)\lambda_n-(1+\varepsilon)\lambda_{k+1}}{5}.
\end{equation}
Then 
\begin{equation}
\dist{\hmU_1}{\mW_1} \;\leq\;
\frac{4\sqrt{m}\gamma_h(\sqrt{m}\gamma_h+2L)}{(1-\varepsilon)\lambda_n - (1+\varepsilon)\lambda_{n+1}} \;+\;
\frac{4\lambda_1}{\lambda_n - \lambda_{n+1}},
\end{equation}
with high probability. 
\end{theorem}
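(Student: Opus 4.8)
The plan is to route the comparison through the Monte Carlo estimate $\hmC$ built from \emph{exact} gradients, so that the total error splits into a pure sampling error (handled by the results already proved for $\hmC$ versus $\mC$) and a pure gradient-approximation error (handled by Lemma~\ref{lem:2norm} for $\hmG$ versus $\hmC$). Since the subspace distance is the operator norm of a difference of orthogonal projectors, it obeys the triangle inequality, so I would first write
\begin{equation}
\dist{\hmU_1}{\mW_1} \;\leq\; \dist{\hmU_1}{\hmW_1} \;+\; \dist{\hmW_1}{\mW_1}.
\end{equation}
The second summand is exactly the object bounded in Corollary~\ref{cor:evecs}: the hypotheses on $\varepsilon$ and $N$ here include those of that corollary, so with high probability $\dist{\hmW_1}{\mW_1} \leq 4\lambda_1\varepsilon/(\lambda_n-\lambda_{n+1})$, which accounts for the second term of the claim.

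For the first summand I compare the leading $n$-dimensional eigenspaces of $\hmG$ and of $\hmC$, whose defining matrices differ by the perturbation $\mE = \hmG - \hmC$. I would apply the subspace-perturbation bound \cite[Corollary 8.1.11]{golub1996matrix} exactly as in the proof of Corollary~\ref{cor:evecs}, but now centered at $\hmC$ instead of $\mC$: this yields $\dist{\hmU_1}{\hmW_1} \leq 4\|\mE\|/(\hlambda_n-\hlambda_{n+1})$, valid provided $\|\mE\|$ is at most one fifth of the \emph{empirical} eigenvalue gap $\hlambda_n - \hlambda_{n+1}$. Lemma~\ref{lem:2norm} bounds the numerator by $\|\mE\| \leq \sqrt{m}\gamma_h(\sqrt{m}\gamma_h+2L)$, so everything reduces to lower-bounding that empirical gap.

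The crux, and the step I expect to require the most care, is controlling $\hlambda_n-\hlambda_{n+1}$, since $\hmC$ is random and its gap is not known a priori. Here I would invoke the eigenvalue concentration of Theorem~\ref{thm:prob}: with $N$ chosen to satisfy \eqref{eq:eigN} for $k=n+1$ (which, since $\kappa_{n+1}\geq\kappa_n$, also meets the milder requirement for $k=n$), a union bound gives, with high probability, both $\hlambda_n \geq (1-\varepsilon)\lambda_n$ (lower estimate) and $\hlambda_{n+1} \leq (1+\varepsilon)\lambda_{n+1}$ (upper estimate) simultaneously, whence
\begin{equation}
\hlambda_n - \hlambda_{n+1} \;\geq\; (1-\varepsilon)\lambda_n - (1+\varepsilon)\lambda_{n+1}.
\end{equation}
The condition $\varepsilon < (\lambda_n-\lambda_{n+1})/(\lambda_n+\lambda_{n+1})$ is precisely what makes this lower bound strictly positive, and the step-size condition \eqref{eq:hcond} is precisely what forces $\|\mE\|$ below one fifth of this gap, so \cite[Corollary 8.1.11]{golub1996matrix} applies. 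Substituting the gap bound into the perturbation estimate then gives
\begin{equation}
\dist{\hmU_1}{\hmW_1} \;\leq\; \frac{4\sqrt{m}\gamma_h(\sqrt{m}\gamma_h+2L)}{(1-\varepsilon)\lambda_n - (1+\varepsilon)\lambda_{n+1}},
\end{equation}
the first term of the claim.

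To finish, I would collect the several high-probability events in play — the gap event from Theorem~\ref{thm:prob} and the $\|\hmC-\mC\|$ event of Corollary~\ref{cor:maxeigN} underlying Corollary~\ref{cor:evecs} — via a single union bound, so that all the invoked bounds hold together with high probability, and then add the two summands. The only genuine subtlety is bookkeeping: making sure the perturbation bound in the first summand is anchored at the \emph{random} matrix $\hmC$ (so its gap must be estimated stochastically, not taken from $\mC$), which is exactly why the denominator carries the $(1\pm\varepsilon)$ factors rather than the deterministic gap $\lambda_n-\lambda_{n+1}$.
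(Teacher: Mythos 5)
Your proposal is correct and follows essentially the same route as the paper's proof: the same triangle-inequality split through $\hmW_1$, Corollary~\ref{cor:evecs} for the sampling term, Lemma~\ref{lem:2norm} plus the Golub--Van Loan perturbation bound anchored at $\hmC$ for the gradient-error term, and the same stochastic lower bound $(1-\varepsilon)\lambda_n-(1+\varepsilon)\lambda_{n+1}$ on the empirical gap via the eigenvalue concentration results. The only cosmetic difference is that you derive the gap bound directly from the two one-sided estimates rather than by the paper's rearrangement of a triangle inequality, and your version of the second summand correctly retains the factor of $\varepsilon$ from Corollary~\ref{cor:evecs} (which the theorem statement drops, harmlessly, since $\varepsilon\leq 1$).
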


\begin{proof}
The conditions on $N$ and $\varepsilon$ imply $|\hlambda_{n+1}-\lambda_{n+1}|\leq\varepsilon\lambda_{n+1}$ with high probability due to Corollary \ref{thm:N}. Examining \eqref{eq:eigN}, we see that if $N$ is large enough to estimate $\lambda_{n+1}$, then $N$ is large enough to estimate $\lambda_n$, so $|\hlambda_n-\lambda_n|\leq\varepsilon\lambda_n$ with high probability, too. Then
\begin{equation}
\begin{aligned}
\lambda_n-\lambda_{n+1} &= |\lambda_n-\lambda_{n+1}|\\
&\leq |\lambda_n-\hlambda_n| + |\hlambda_{n+1}-\lambda_{n+1}| + (\hlambda_n-\hlambda_{n+1})\\
&\leq \varepsilon\lambda_n + \varepsilon\lambda_{n+1} + (\hlambda_n-\hlambda_{n+1}),
\end{aligned}
\end{equation}
with high probability. Rearranging this inequality yields
\begin{equation}
\label{eq:thegap}
\hlambda_n-\hlambda_{n+1} 
\;\geq\;
(1-\varepsilon)\lambda_n - (1+\varepsilon)\lambda_{n+1}.
\end{equation}
This relates the gap between the eigenvalue estimates to the gap between the true eigenvalues. The condition on $\varepsilon$ ensures that
\begin{equation}
(1-\varepsilon)\lambda_n - (1+\varepsilon)\lambda_{n+1}
\;>\; 0.
\end{equation}
Next,
\begin{equation}
\dist{\hmU_1}{\mW_1} \;\leq\; \dist{\hmU_1}{\hmW_1} + \dist{\hmW_1}{\mW_1}.
\end{equation}
The second term on the right is bounded in Corollary \ref{cor:evecs} under the assumptions on $N$ and $\varepsilon$. The condition \eqref{eq:hcond} on $h$ and \eqref{eq:thegap} imply
\begin{equation}
\sqrt{m}\gamma_h(\sqrt{m}\gamma_h + 2L) \leq \frac{\hlambda_n-\hlambda_{k+1}}{5}.
\end{equation}
Then \cite[Corollary 8.1.11]{golub1996matrix} implies 
\begin{equation}
\dist{\hmU_1}{\hmW_1} \;\leq\;
\frac{4}{\hlambda_n - \hlambda_{n+1}}\|\hmG-\hmC\|. 
\end{equation}
Combining this with \eqref{eq:thegap} and the bound from Lemma \ref{lem:2norm} yields the result. 
\end{proof}

In summary, the eigenvalues and the active subspace approximated with Monte Carlo and approximate gradients are well-behaved. The error bounds include a term that goes to zero like the error in the approximate gradient and a term that behaves like the finite sample approximation with exact gradients. Note that the error bound on the subspace estimate depends on both the gap between $\lambda_n$ and $\lambda_{n+1}$ and a smaller gap that depends on $\varepsilon$.

\section{Practical approach to computation}
\label{sec:prac}

The bounds we present in Section \ref{sec:comp} provide a theoretical foundation for understanding the behavior of the Monte Carlo estimates. However, many of the quantities in the bounds may not be known a priori---such as the maximum norm of the gradient $L$ and the true eigenvalues of the matrix $\mC$. In this section we offer a practical recipe guided by the insights from the theory. We caution that the following approach, which relies on a nonparametric bootstrap, can perform poorly for badly-behaved functions. For example, one could be unlucky and sample the gradient in regions that are not representative of the gradient over the entire domain; the bootstrap uses only the $N$ samples used to compute the eigenpair estimates. Also, errors in the gradients could produce poor approximations of the eigenvalues and subspaces; we show an example of this in Section \ref{sec:quad}. Nevertheless, we have used the following approach on several problems in practice to reveal low-dimensional structure in complex functions of several variables coming from engineering simulations~\cite{constantine2014active,constantine2014discovering,Lukaczyk2014}.

The first objective is to estimate the eigenvalues and a measure of the estimates' variability from the finite samples. Suppose one wishes to estimate the first $k$ eigenpairs from the matrix $\mC$. Practical considerations guide the choice of $k$. For example, if one wishes to build a response surface approximation of $f$ on a low-dimensional domain, then five or six dimensions might be the most one can afford given the cost of computing $f(\vx)$. Hence $k$ might be seven or eight to allow the possibility of finding a gap that indicates a sufficiently low-dimensional approximation. If a gap is not present in the first $k$ eigenvalues, then $f$ may not be amenable to dimension reduction via active subspaces for the desired purpose. 

We recommend choosing the number $N$ of independent gradient samples as
\begin{equation}
\label{eq:pracN}
N \;=\; \alpha\,k\,\log(m),
\end{equation}
where $\alpha$ is a multiplier between 2 and 10. Taking at least $k$ samples means that $\mC$ is a sum of $k$ rank-one matrices and thus has a rank of at most $k$. This allows the possibility of estimating $k$ non-zero eigenvalues. The $\log(m)$ term follows from the bounds in Theorem \ref{thm:prob}. The $\alpha$ between 2 and 10 is an ad hoc multiplicative factor that we have used on several problems. In principle, $\alpha k$ is meant to model the contribution from the unknown terms $L$, $\kappa_k$, $\nu^2$, and $\lambda_1$ in \eqref{eq:eigN} and \eqref{eq:maxeigN}. It is likely that the combination of these terms with the $\varepsilon^{-1}$ is greater than $10k$ for small $\varepsilon$. However, the Bernstein inequalities used to derive the lower bounds on $N$ in Corollaries \ref{thm:N} and \ref{cor:maxeigN} are also conservative. One can also assess if $N$ is large enough a posteriori by examining the bootstrap intervals described below. 

%

We form $\hmC$ using the samples of the gradient as in \eqref{eq:compC}, and then compute its eigenvalue decomposition. We expect that computing the full eigendecomposition is much cheaper than computing the gradient samples. A function of a thousand variables produces $\hmC$ with dimension thousand-by-thousand. Full eigendecompositions for matrices this size are computed in seconds on modern laptops.  


We suggest computing bootstrap intervals for the eigenvalues, which involves computing the eigendecompositions of several matrices the size of $\hmC$. The bootstrap creates replicates by (i) sampling with replacement from the set of gradient samples, (ii) computing the replicate $\hmC^\ast$, and (iii) computing its eigenvalue decomposition. The collection of eigenvalue replicates is used to estimate bounds on the true eigenvalues. Efron and Tibshirani use the bootstrap to get empirical density functions of estimated eigenvalues from a covariance matrix in section 7.2 of their book~\cite{efron1994introduction}. Chapter 3 of Jolliffe's book~\cite{jolliffe2005principal} also comments on the bootstrap approach for estimating eigenvalues and eigenvectors of a covariance matrix from independent samples. The bootstrap estimates of the standard error and confidence intervals for the eigenvalues may be biased, but this bias decreases as the number $N$ of samples increases. Since these estimates may be biased, we refer to them as \emph{bootstrap intervals} instead of \emph{confidence intervals}. 

Corollary \ref{cor:evecs} says that the error in the estimated subspace depends inversely on the gap between the eigenvalues scaled by the largest eigenvalue. The key to accurately approximating the subspace is to look for gaps in the eigenvalues; this is consistent with standard perturbation theory for eigenvector computations~\cite{stewart1973}. For example, if there is a larger gap between the third and fourth eigenvalues than between the second and third, then estimates of the three-dimensional subspace are more accurate than estimates of the two-dimensional subspace. This contrasts with heuristics for choosing the dimension of the subspace in (i) model reduction based on the proper orthogonal decomposition~\cite{berkooz1993pod} and (ii) dimension reduction based on principal component analysis~\cite{jolliffe2005principal}. In these cases, one chooses the dimension of the subspace by a threshold on the magnitude of the eigenvalues---e.g., so that the sum of retained eigenvalues exceeds some proportion of the sum of all eigenvalues. 
To accurately approximate the active subspace, the most important quantity is the spectral gap, which indicates a separation between scales. To tease out the spectral gap, plot the estimated eigenvalues and their respective upper and lower bootstrap intervals; a gap between subsequent intervals offers confidence of a spectral gap and, hence, the presence of an active subspace. In Section \ref{sec:experiments}, we show several examples of such plots (Figures \ref{fig:quad0}, \ref{fig:quad1}, \ref{fig:pde0}, \ref{fig:pde1}). 

One should also consider the intent of the dimension reduction when choosing the dimension of the active subspace. For example, if the goal is to approximate a function of $m$ variables by a surrogate model of $n$ variables---as in~\cite{constantine2014active}---then one may be limited to $n$'s small enough to permit surrogate construction. Suppose the largest $n$ one is willing to use is $n=n_{\text{max}}=5$, but there is no gap between consecutive eigenvalues from $\lambda_1$ to $\lambda_6$. Then subspace-based dimension reduction may not be an appropriate tool, and one should consider searching for other types of exploitable structure in the model.  

Assuming we have chosen $n$, we wish to study the variability in the active subspace due to finite sampling; we again turn to the bootstrap. In particular, for each replicate $\hmW^\ast$ of the eigenvectors, we compute $\operatorname{dist}(\text{ran}(\hmW_1),\text{ran}(\hmW_1^\ast))$. One can examine the bootstrap intervals of this quantity to study the stability of the subspace. Recall that the distance between subspaces is bounded above by 1, so a bootstrap interval whose values are close to 1 indicates a poorly approximated active subspace. Figures \ref{fig:quad0}, \ref{fig:quad2}, \ref{fig:pde0}, and \ref{fig:pde1} show examples of plotting this metric for the stability of the subspace; the first two Figures also compare the measure of stability to the true error in the active subspace. 

\subsection{A step-by-step procedure}
We summarize the practical approach to approximating the active subspace with bootstrap intervals. What follows is a modification of the procedure outlined at the beginning of Section \ref{sec:comp} including our suggestions for parameter values. This procedure assumes the user has decided on the number $k$ of eigenvalues to examine. 
\begin{enumerate}
\item Choose $N = \alpha k \log(m)$, where $\alpha$ is a multiplier between 2 and 10, and choose $N_{\text{boot}}$ between 100 and 10000.
\item Draw $N$ samples $\{\vx_j\}$ independently from $\rho$. For each $\vx_j$, compute $\gf_j = \gf(\vx_j)$. 
\item Compute 
\begin{equation}
\hmC \;=\;
\frac{1}{N} \sum_{j=1}^N (\nabla_\vx f_j)(\nabla_\vx f_j)^T \;=\;
\hmW\hLambda\hmW^T.
\end{equation}
\item \textbf{Bootstrap:} For $i$ from 1 to $N_{\text{boot}}$, let $\ell^i_1,\dots,\ell^i_N$ be $N$ integers drawn randomly from $\{1,\dots,N\}$ with replacement, and compute
\begin{equation}
\label{eq:bstrap}
\hmC_i^\ast \;=\;
\frac{1}{N} \sum_{j=1}^N (\nabla_\vx f_{\ell^i_j})(\nabla_\vx f_{\ell^i_j})^T \;=\;
\hmW_i^\ast\,\hLambda_i^\ast\,\left(\hmW_i^\ast\right)^T.
\end{equation}
The asterisk denotes a bootstrap replicate. Then compute the subspace distance
\begin{equation}
d_i^\ast \;=\; \dist{\hmW}{\hmW_i^\ast}.
\end{equation}
\item Compute the intervals 
\begin{equation}
\left[ 
\underset{i}{\text{min}}\;\hlambda_{j,i}^\ast,\,
\underset{i}{\text{max}}\;\hlambda_{j,i}^\ast
\right],\qquad j=1,\dots,k
\end{equation}
where $\hlambda_{j,i}^\ast$ is the $j$th diagonal from $\hLambda_i^\ast$ in \eqref{eq:bstrap}. Also compute the mean, minimum, and maximum from the set $\{d_i^\ast\}$ to estimate the subspace error. 
\item Plot the eigenvalue bootstrap intervals and look for large gaps. Choose the dimension $n$ of the active subspace corresponding to the largest eigenvalue gap. If there is no perceivable gap, then an active subspace may not be present in the first $k-1$ dimensions. 
\end{enumerate}
A few comments are in order. First, we assume the dimension $m$ of $\hmC$ is small enough so that the eigendecompositions of $\hmC$ and its bootstrap replicates are much cheaper than the samples of the gradient. Such is the case when $m$ is in the thousands (i.e., $f$ depends on thousands of input variables), and $f$ and $\gf$ come from an expensive engineering simulation. Second, we choose the bootstrap to examine the variability because we assume that sampling more gradients is not feasible. If this is not the case, i.e., if one can cheaply evaluate many more gradient samples, then one can compute Monte Carlo estimates and central limit theorem confidence intervals of the eigenvalues in place of the bootstrap estimates. Lastly, we note that the elements of $\mC$ are multivariate integrals. If $m$ is small enough (2 or 3) and evaluating $\gf$ is cheap enough, then more accurate numerical quadrature rules may perform better than the random sampling, i.e., greater accuracy for fewer samples. However, practical error estimates are more difficult to compute, since the error is due to bias instead of variance.

\section{Experiments}
\label{sec:experiments}

We apply the procedures described in Section \ref{sec:prac} to two models: (i) a quadratic function and (ii) a linear functional of the solution of a parameterized PDE. The quadratic model is simple enough to analytically derive the eigenpairs of the active subspace for thorough evaluation of the method. We study the same PDE model in~\cite[Section 5]{constantine2014active}. Gradients are available through adjoints, but the true active subspaces are not available. We support efforts for reproducible research~\cite{leveque2009python,donoho2009reproducible}; codes for the experiments in this section can be found at 
\begin{center}
\url{https://www.cs.purdue.edu/homes/dgleich/codes/compute-asm/compute-asm-code.tar.gz} 
\end{center} 
The PDE example uses the Random Field Simulation code (\url{http://www.mathworks.com/matlabcentral/fileexchange/27613-random-field-simulation}) as well as the MATLAB PDE Toolbox.

\subsection{A quadratic model}
\label{sec:quad}
Consider a quadratic function of $m=10$ variables, 
\begin{equation}
f(\vx) \;=\; \frac{1}{2}\vx^T\mA\vx, \quad \vx\in[-1,1]^{10},
\end{equation}
where $\mA$ is symmetric and positive definite. We take $\rho=2^{-10}$ on the hypercube $[-1,1]^{10}$ and zero elsewhere. The gradient is $\gf(\vx) = \mA\vx$, so
\begin{equation}
\mC \;=\; \mA\left(\intrho{\vx\,\vx^T}\right)\mA^T \;=\;
\frac{1}{3} \mA^2. 
\end{equation}
The eigenvalues of $\mC$ are the squared eigenvalues of $\mA$ divided by 3, and the eigenvectors of $\mC$ are the eigenvectors of $\mA$. 

We study three different $\mA$'s constructed from three choices for the eigenvalues: (1) exponential decay with a constant rate, (2) like the first but with a larger gap between the first and second eigenvalue, and (3) like the first with a larger gap between third and fourth eigenvalue. The three cases of eigenvalues for $\mA$ are shown in the top row of Figure \ref{fig:quad0}. Each $\mA$ has the same eigenvectors, which we generate as an orthogonal basis from a random $10\times 10$ matrix. 


To estimate the eigenvalues, we choose $N$ as in \eqref{eq:pracN} with the multiplier $\alpha=2$ and $k=6$ eigenvalues of interest, which yields $N=28$ evaluations of the gradient. The middle row of Figure \ref{fig:quad0} shows the bootstrap intervals for the first six eigenvalues along with the true eigenvalues of $\mC$. The small bootstrap intervals suggest confidence in the estimates. The gaps are apparent in the last two cases. The bottom row of Figure \ref{fig:quad0} shows bootstrap intervals on the distance between the true $k$-dimensional active subspace and the subspace estimated with the $N$ samples; the true distance is indicated by the circles. Notice that subspaces corresponding to the larger eigenvalue gap are much better approximated than the others. For example, the three-dimensional subspace is better approximated than the one- and two-dimensional subspaces for the third case.

\begin{figure}[ht]
\centering
\subfloat[Case 1]{
\includegraphics[width=0.3\linewidth]{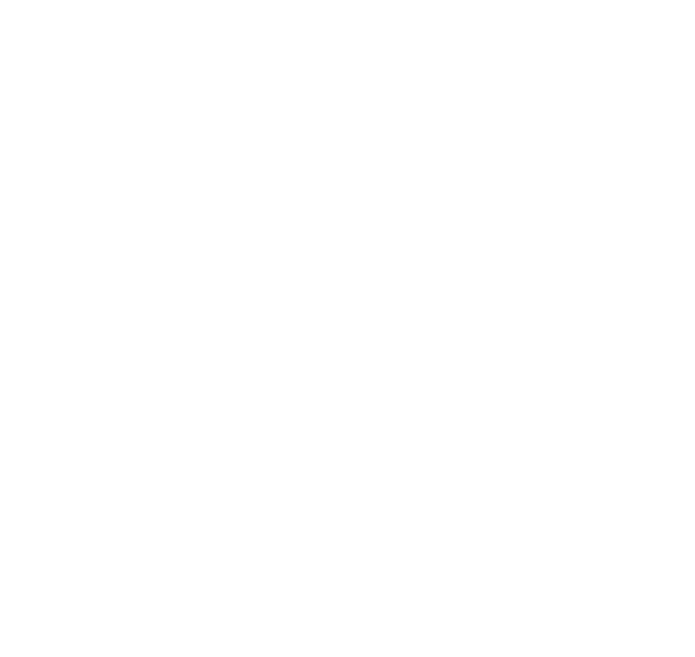}
}\;
\subfloat[Case 2]{
\includegraphics[width=0.3\linewidth]{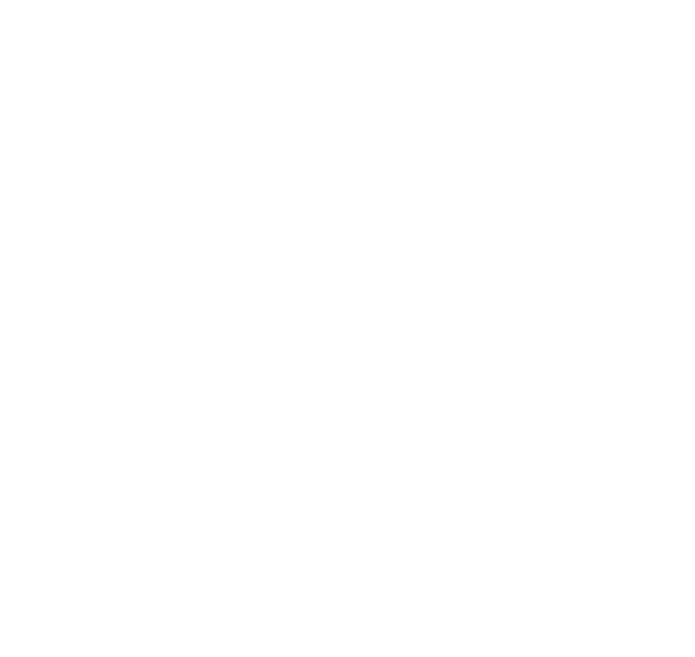}
}\;
\subfloat[Case 3]{
\includegraphics[width=0.3\linewidth]{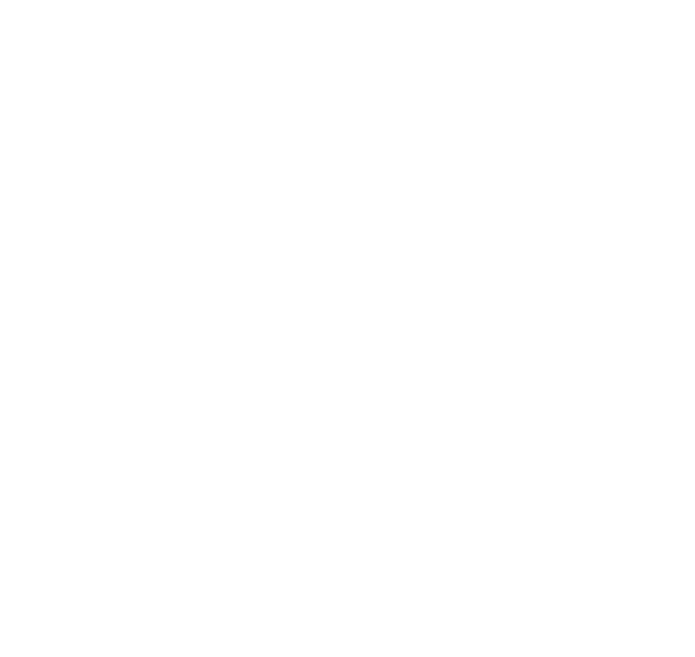}
}\\
\subfloat[Case 1]{
\includegraphics[width=0.3\linewidth]{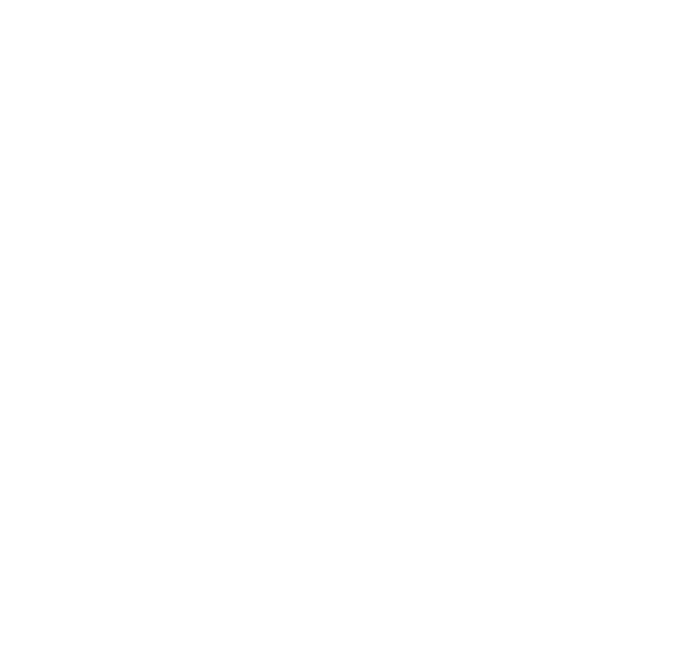}
}\:
\subfloat[Case 2]{
\includegraphics[width=0.3\linewidth]{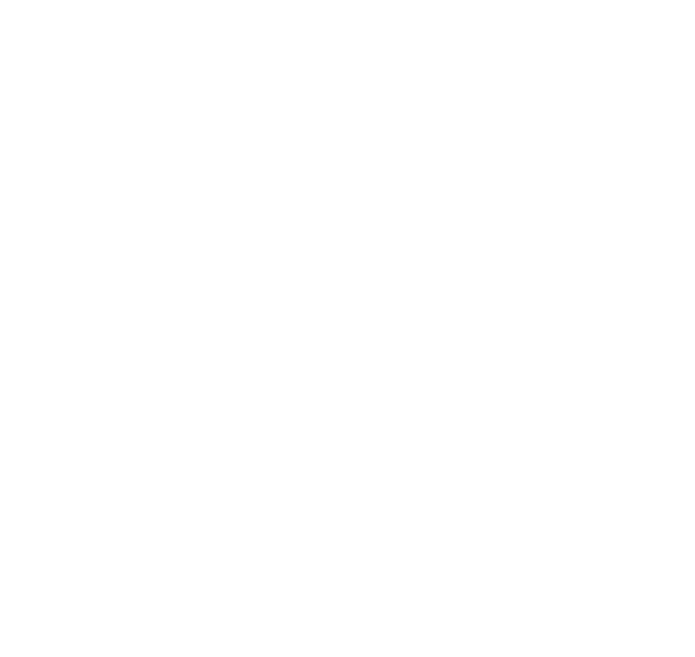}
}\;
\subfloat[Case 3]{
\includegraphics[width=0.3\linewidth]{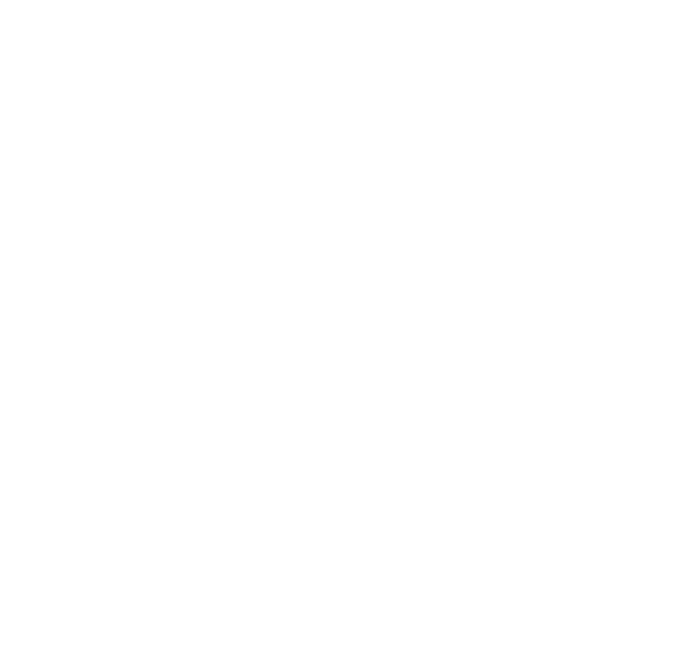}
}\\
\subfloat[Case 1]{
\includegraphics[width=0.3\linewidth]{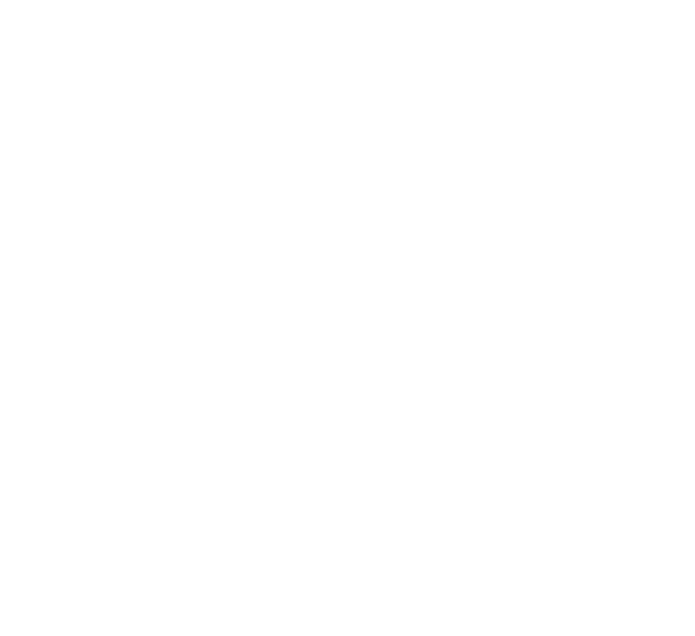}
}\;
\subfloat[Case 2]{
\includegraphics[width=0.3\linewidth]{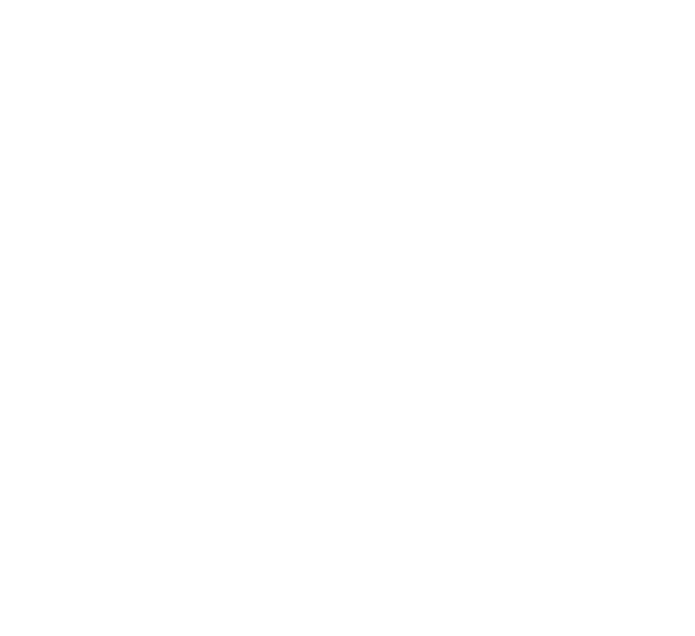}
}\;
\subfloat[Case 3]{
\includegraphics[width=0.3\linewidth]{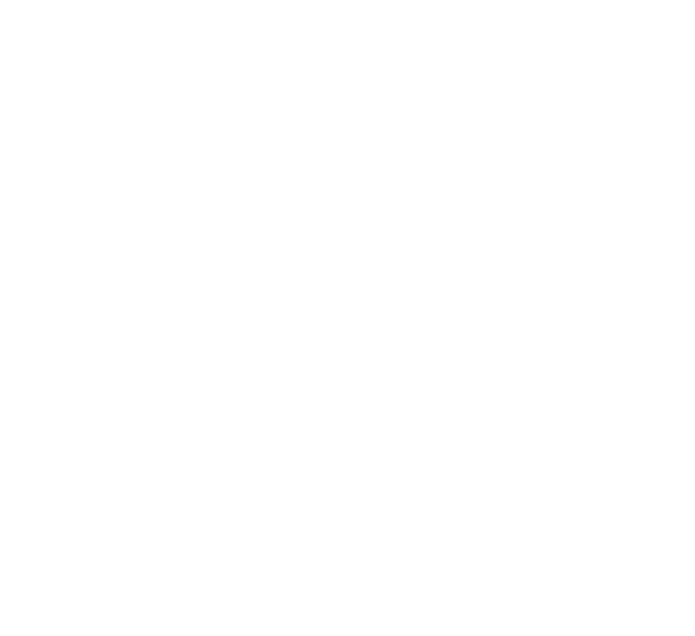}
}
\caption{The top row shows the eigenvalues of the three choices for $\mA$. The second row shows the true and estimated eigenvalues along with the bootstrap intervals; eigenvalues are well approximated for all three cases. The third row shows the distance between the estimated subspace and the true subspace. In practice we do not have the true subspace, but we can estimate the distance with a bootstrap procedure as described in Section \ref{sec:prac}; the bootstrap intervals are shown, and the accuracy of the subspace estimates corresponds to the gaps in the eigenvalues of $\mC$. }
\label{fig:quad0}
\end{figure} 

Next we repeat the study using finite difference approximations of the gradient with step size $h=10^{-1}$, $10^{-3}$, and $10^{-5}$. The first of these step sizes is larger than would normally be used for such a model. We chose this large value to study the interplay between inaccurate gradients and the finite sample approximations of the eigenpairs. Figure \ref{fig:quad1} shows the true eigenvalues, their estimates, and the bootstrap intervals for all three cases and all three values of $h$; the horizontal lines show the value of $h$. Eigenvalues that are smaller than $h$ are estimated less accurately than those larger than $h$, which is not surprising since we are using first order finite differences. Also the gaps are much less noticeable in the estimates when finite difference parameter is not small enough to resolve the smaller eigenvalue in the pair defining the gap. In fact, this particular problem shows a large gap in the finite difference approximations when there is none in the true eigenvalues; see Figure~\ref{fig:quad1}(b,c) for examples of this phenomenon. 

Figure~\ref{fig:quad2} shows the distance between the true active subspace and the finite sample estimate with approximate gradients (circles). We use the bootstrap to estimate the error in the subspace as in Section \ref{sec:prac}. There is a strong bias in the estimates of the subspace error when the corresponding eigenvalues are not properly resolved. For instance, in Figure~\ref{fig:quad2}, the estimates of the error for subspaces of dimension 4 through 6 are biased for $h=10^{-3}$ and significantly biased for $h=10^{-1}$. Compare this to the error in the last three eigenvalues for the smallest $h=10^{-5}$ in Figure \ref{fig:quad1}. 

\begin{figure}[ht]
\centering
\subfloat[Case 1, $h=10^{-1}$]{
\includegraphics[width=0.3\linewidth]{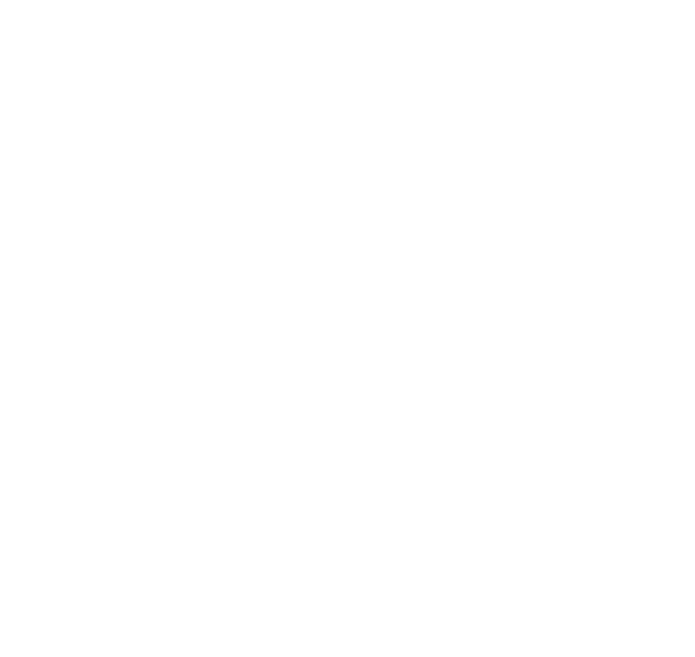}
}\;
\subfloat[Case 2, $h=10^{-1}$]{
\includegraphics[width=0.3\linewidth]{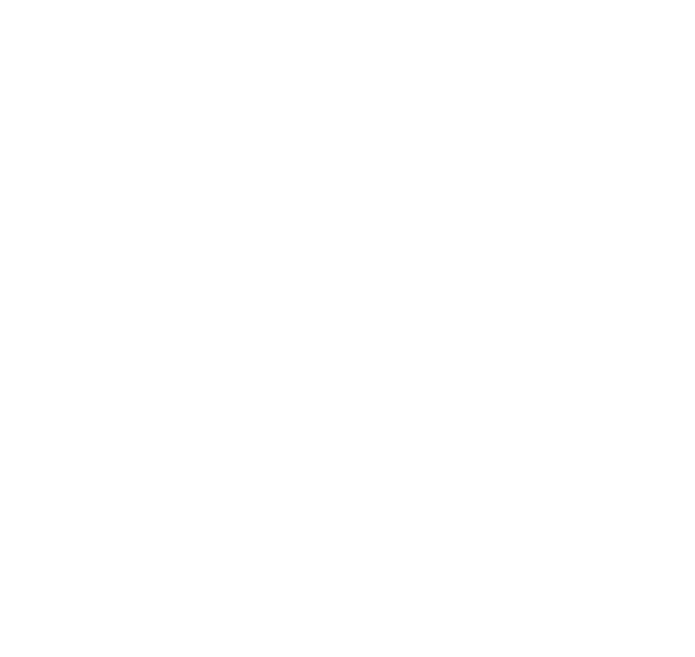}
}\;
\subfloat[Case 3, $h=10^{-1}$]{
\includegraphics[width=0.3\linewidth]{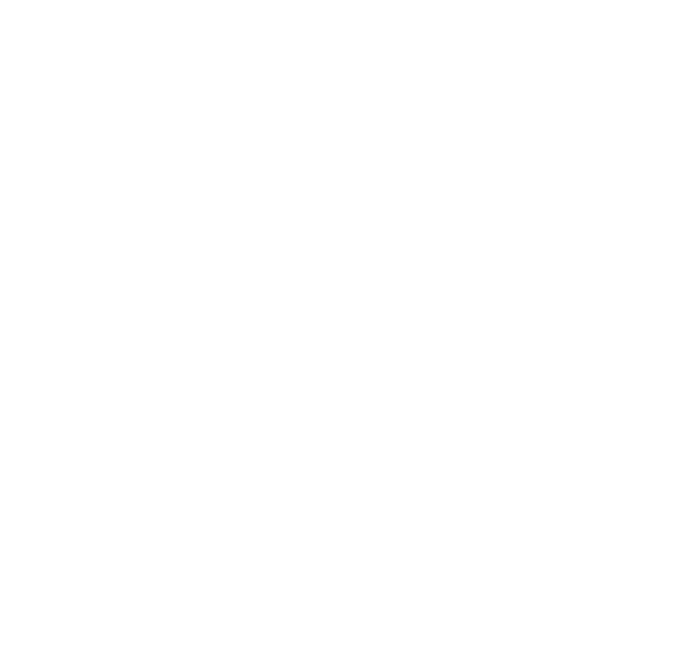}
}\\
\subfloat[Case 1, $h=10^{-3}$]{
\includegraphics[width=0.3\linewidth]{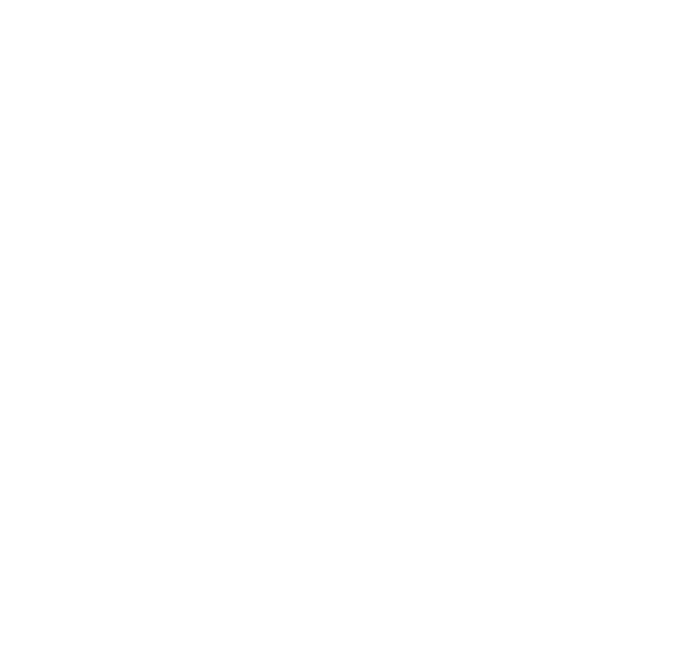}
}\;
\subfloat[Case 2, $h=10^{-3}$]{
\includegraphics[width=0.3\linewidth]{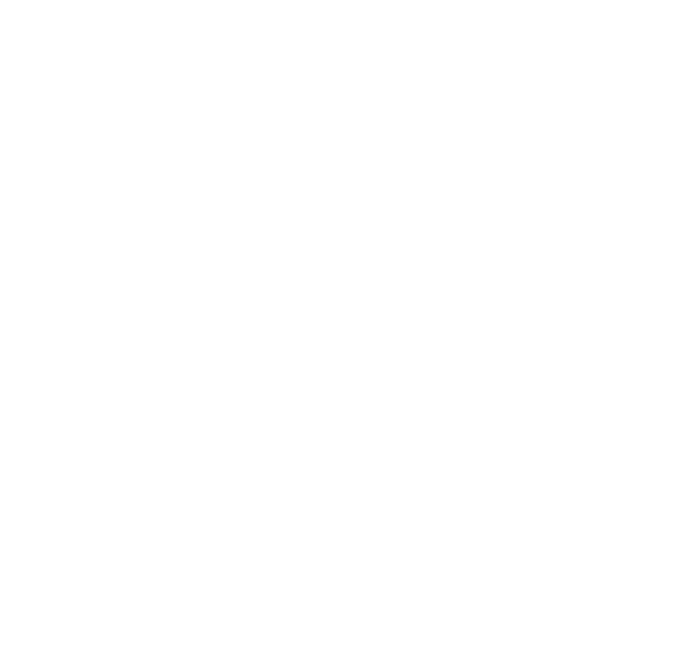}
}\;
\subfloat[Case 3, $h=10^{-3}$]{
\includegraphics[width=0.3\linewidth]{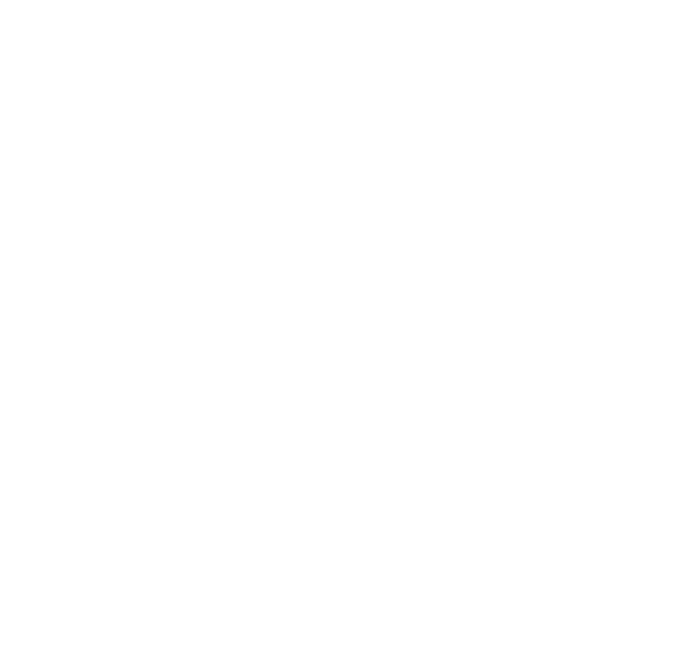}
}\\
\subfloat[Case 1, $h=10^{-5}$]{
\includegraphics[width=0.3\linewidth]{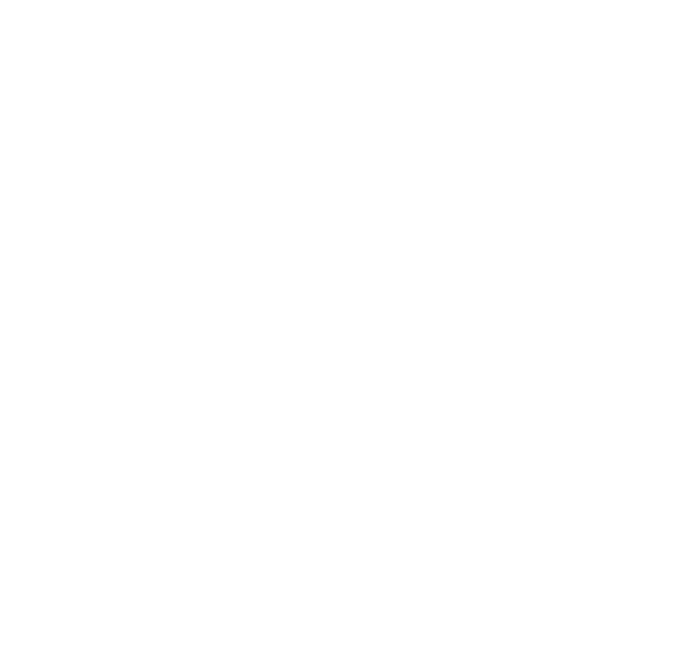}
}\;
\subfloat[Case 2, $h=10^{-5}$]{
\includegraphics[width=0.3\linewidth]{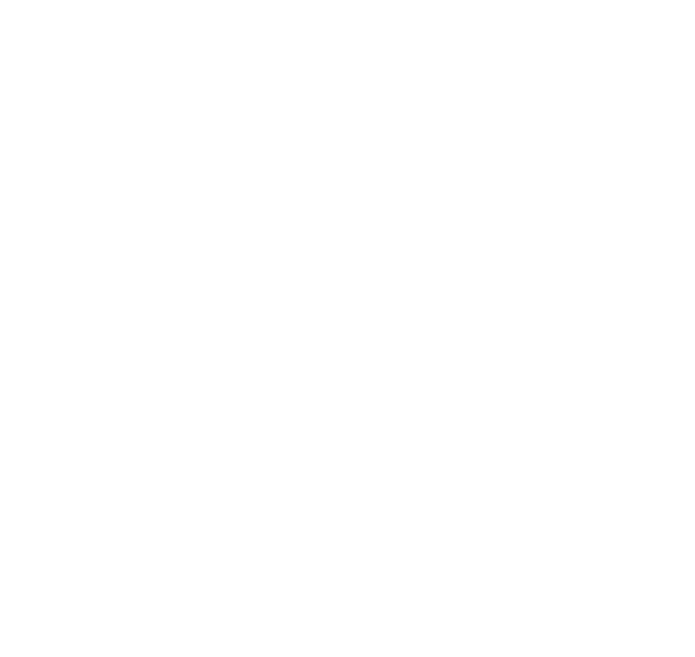}
}\;
\subfloat[Case 3, $h=10^{-5}$]{
\includegraphics[width=0.3\linewidth]{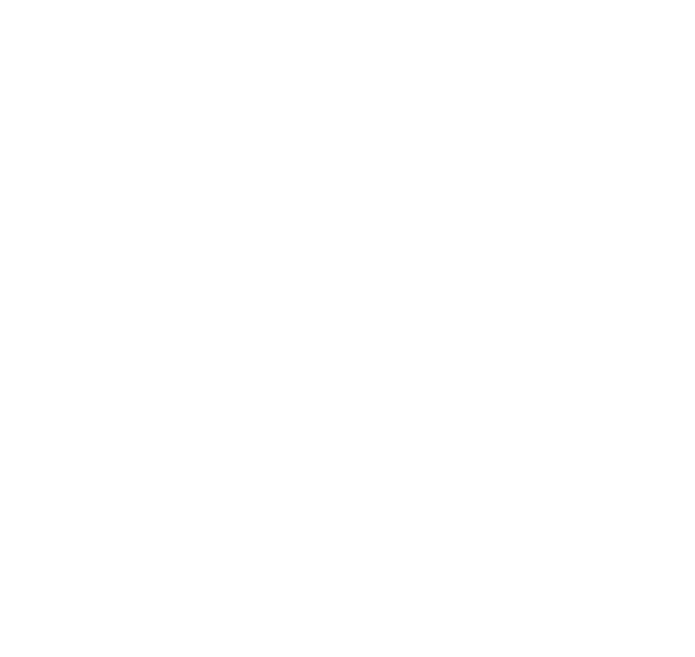}
}
\caption{Eigenvalues, estimates, and bootstrap intervals using finite difference gradients with $h=10^{-1}$ (top row), $h=10^{-3}$ (middle row), and $h=10^{-5}$ (bottom row). The horizontal black lines indicate the value of $h$ in each plot. In general, estimates of eigenvalues smaller than $h$ are less accurate than those larger than $h$.}
\label{fig:quad1}
\end{figure}

\begin{figure}[ht]
\centering
\subfloat[Case 1, $h=10^{-1}$]{
\includegraphics[width=0.3\linewidth]{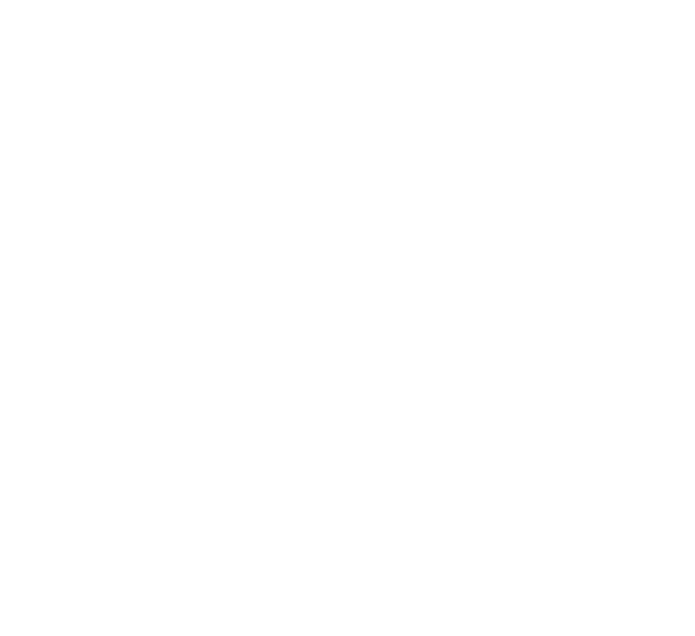}
}\;
\subfloat[Case 2, $h=10^{-1}$]{
\includegraphics[width=0.3\linewidth]{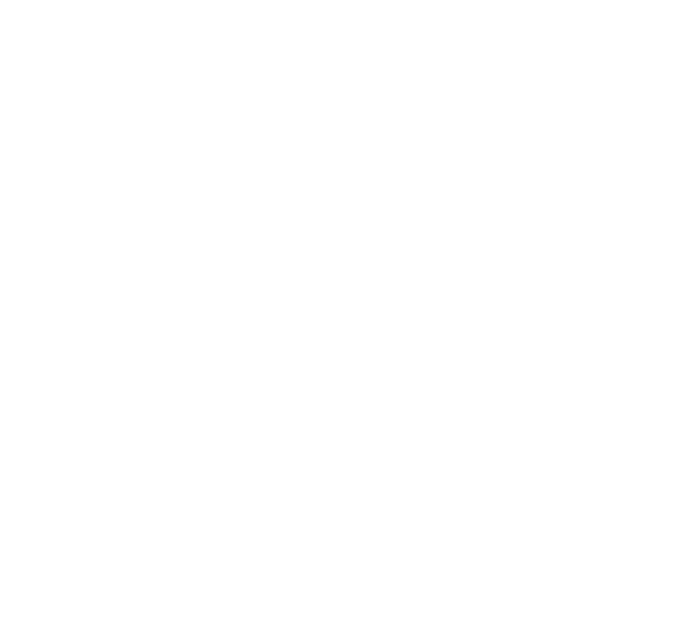}
}\;
\subfloat[Case 3, $h=10^{-1}$]{
\includegraphics[width=0.3\linewidth]{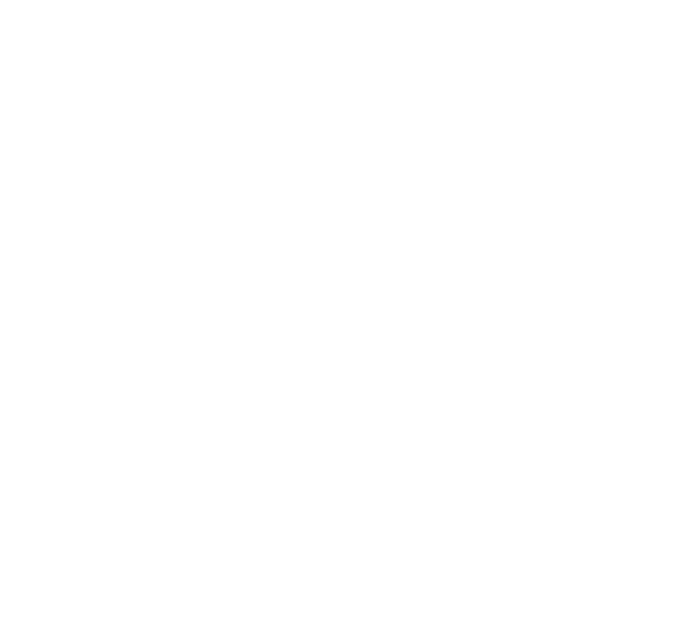}
}\\
\subfloat[Case 1, $h=10^{-3}$]{
\includegraphics[width=0.3\linewidth]{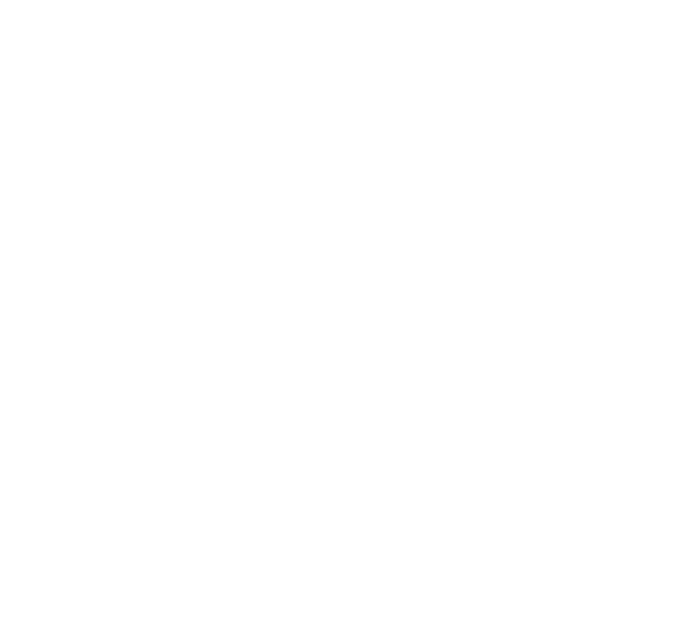}
}\;
\subfloat[Case 2, $h=10^{-3}$]{
\includegraphics[width=0.3\linewidth]{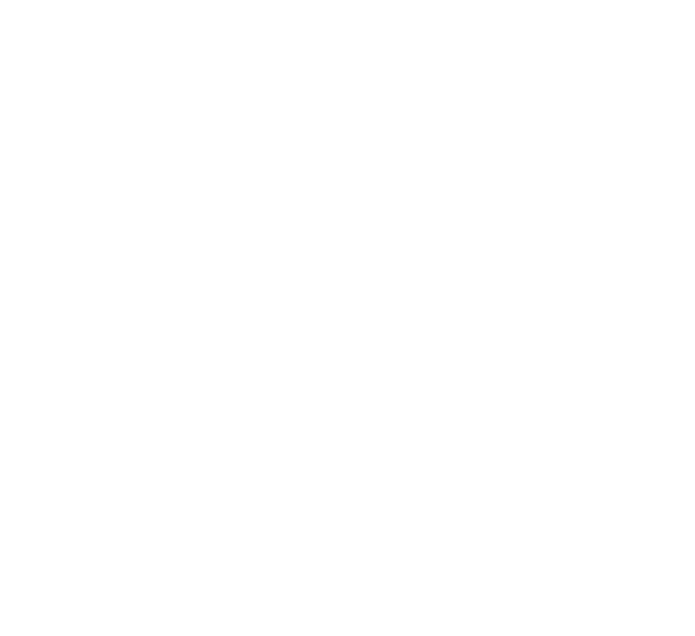}
}\;
\subfloat[Case 3, $h=10^{-3}$]{
\includegraphics[width=0.3\linewidth]{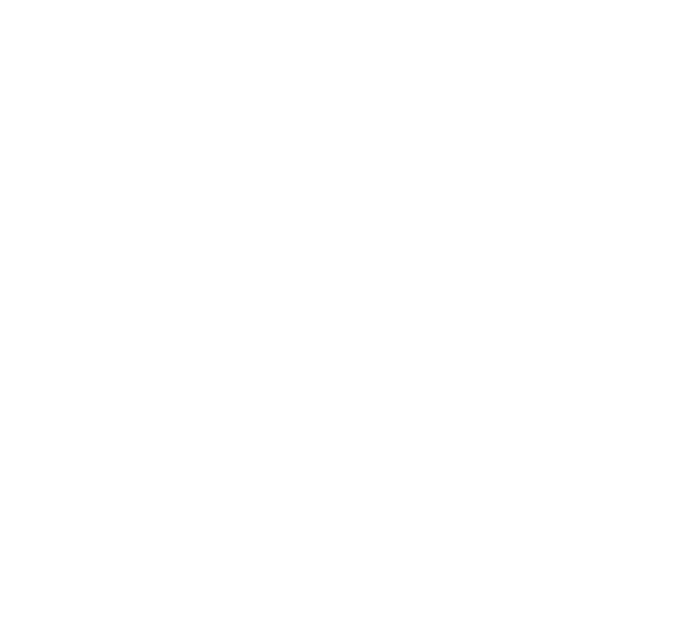}
}\\
\subfloat[Case 1, $h=10^{-5}$]{
\includegraphics[width=0.3\linewidth]{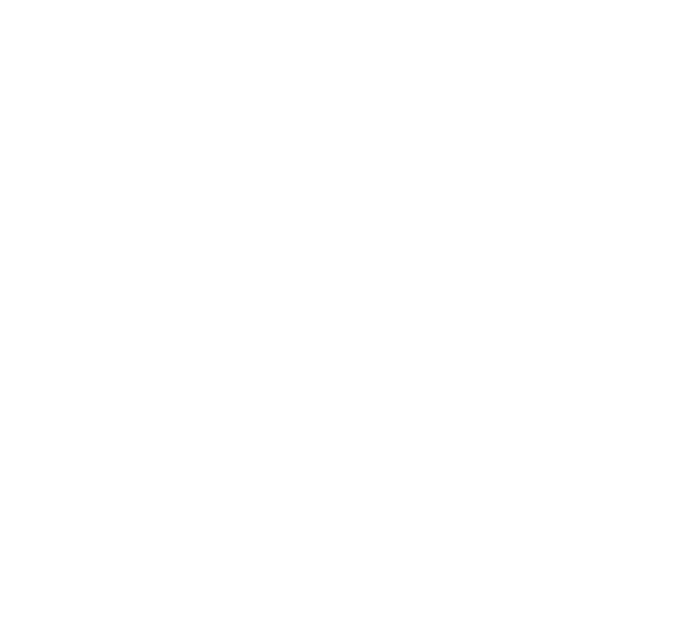}
}\;
\subfloat[Case 2, $h=10^{-5}$]{
\includegraphics[width=0.3\linewidth]{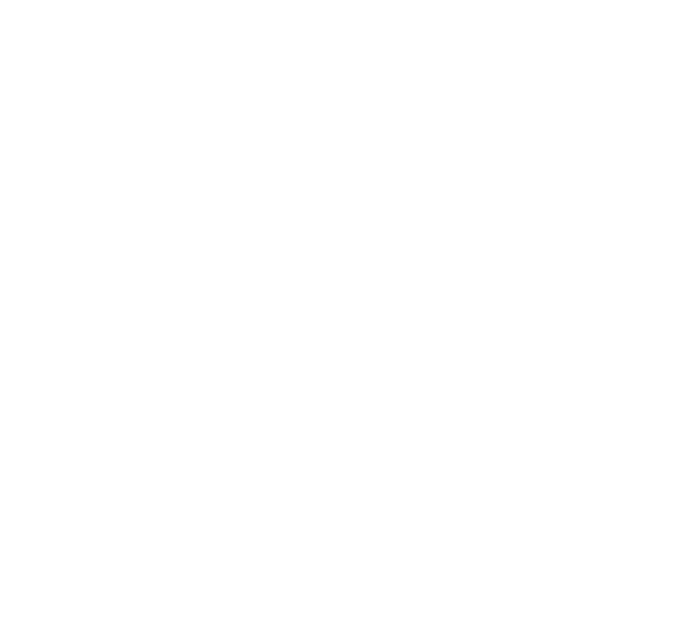}
}\;
\subfloat[Case 3, $h=10^{-5}$]{
\includegraphics[width=0.3\linewidth]{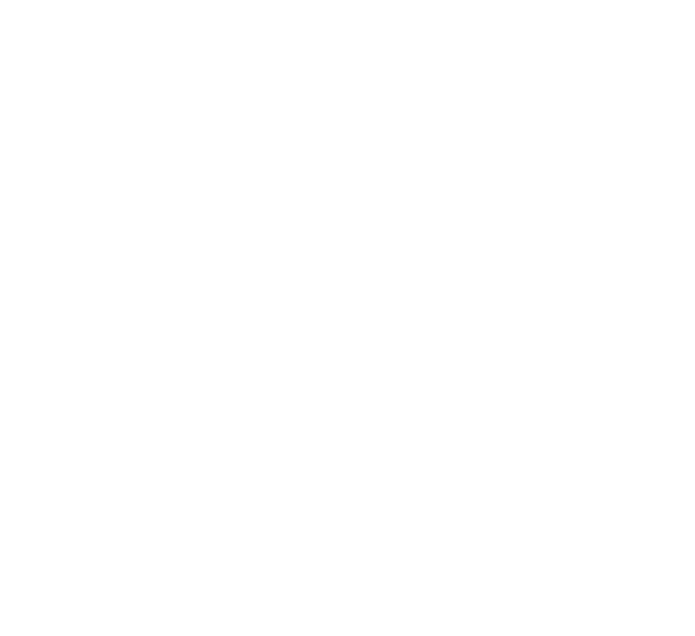}
}
\caption{The distance between the true active subspace and its finite sample approximation along with bootstrap intervals for $h=10^{-1}$ (top row), $h=10^{-3}$ (middle row), and $h=10^{-5}$ (bottom row). The subspaces are very poorly approximated when the finite difference step size is not small enough to resolve the eigenvalues corresponding the subspaces; compare to Figure \ref{fig:quad1}. However, subspaces with a larger associated eigenvalue gap are generally approximated better than others. }
\label{fig:quad2}
\end{figure} 

\subsection{A parameterized PDE model}
In previous work~\cite{constantine2014active}, we exploited the active subspace in the following parameterized PDE model to efficiently construct a kriging surface. Here we perform a more careful study of the variation in the active subspace estimated with finite samples of the gradient. Consider the following linear elliptic PDE with parameterized, variable coefficients. Let $u=u(\vs,\vx)$ satisfy
\begin{equation}
-\nabla_\vs\cdot(a\,\nabla_\vs u) \;=\; 1, \qquad \vs\in[0,1]^2.
\end{equation}
We set homogeneous Dirichlet boundary conditions on the left, top, and bottom of the spatial domain $[0,1]^2$; denote this boundary by $\Gamma_1$. The right side of the spatial domain, denoted $\Gamma_2$, has a homogeneous Neumann boundary condition. The log of the coefficients $a=a(\vs,\vx)$ is given by a truncated Karhunen-Loeve-type expansion
\begin{equation}
\label{eq:kl}
\log(a(\vs,\vx)) \;=\; \sum_{i=1}^m x_i\,\gamma_i\,\phi_i(\vs),
\end{equation}
where the $x_i$ are independent, identically distributed standard normal random variables, and the $\{\phi_i(\vs),\gamma_i\}$ are the eigenpairs of the correlation operator
\begin{equation}
\label{eq:corr}
\mathcal{C}(\vs,\vt) \;=\; \exp\left(-\beta^{-1}\,\|\vs-\vt\|_1\right).
\end{equation}
We study the quality of the active subspace approximation for two correlation lengths, $\beta=1$ and $\beta=0.01$. These correspond to \emph{long} and \emph{short} correlation lengths, respectively, for the random field defining the log of the coefficients. We choose a truncation of the field $m=100$, which implies that the parameter space $\sX=\mathbb{R}^{100}$ with $\rho$ a standard Gaussian density function. Define the linear function of the solution 
\begin{equation}
\label{eq:qoi}
f(\vx) \;=\; \frac{1}{|\Gamma_2|} \int_{\Gamma_2} u(\vs,\vx) \,d\vs.
\end{equation}
This is the quantity of interest from the model (more precisely, its approximation with a finite element method). Given a value for the input parameters $\vx$, we discretize the PDE with a standard linear finite element method using MATLAB's PDE Toolbox. The discretized domain has 34320 triangles and 17361 nodes; the eigenfunctions $\phi_i$ from \eqref{eq:kl} are approximated on this mesh. We compute the gradient of the quantity of interest \eqref{eq:qoi} using a discrete adjoint formulation. Further details appear in our previous work~\cite{constantine2014active}.

The top row of Figure \ref{fig:pde0} shows the estimates of the eigenvalues of $\mC$ along with the bootstrap intervals for $\beta=1$ in \eqref{eq:corr}. The gap between the first and second eigenvalues is apparent and supported by the gap in the corresponding bootstrap intervals. We exploit this gap in~\cite{constantine2014active} to construct an accurate univariate kriging surface of the active variable. The bottom row of Figure \ref{fig:pde0} shows the variance in the estimated subspace as computed with the bootstrap including the bootstrap intervals. The left column of Figure \ref{fig:pde0} uses the multiplier $\alpha=2$ when choosing the number $N$ of gradient samples; the right column uses $\alpha=10$. Notice the overall decrease in both the range of the bootstrap interval and the subspace error as we include more samples. Figure \ref{fig:pde1} shows the identical study with the short correlation length $\beta=0.01$ from \eqref{eq:corr}. 

\begin{figure}[ht]
\centering
\subfloat[$\beta=1$, $\alpha=2$]{
\includegraphics[width=0.45\linewidth]{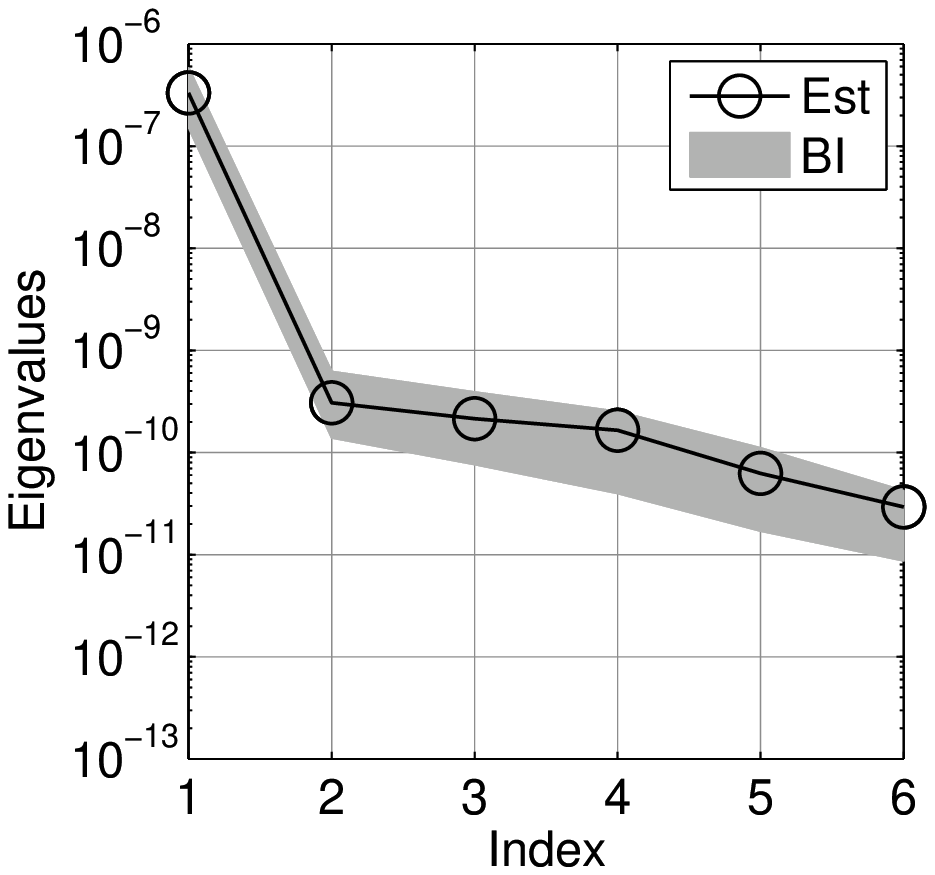}
}\;
\subfloat[$\beta=1$, $\alpha=10$]{
\includegraphics[width=0.45\linewidth]{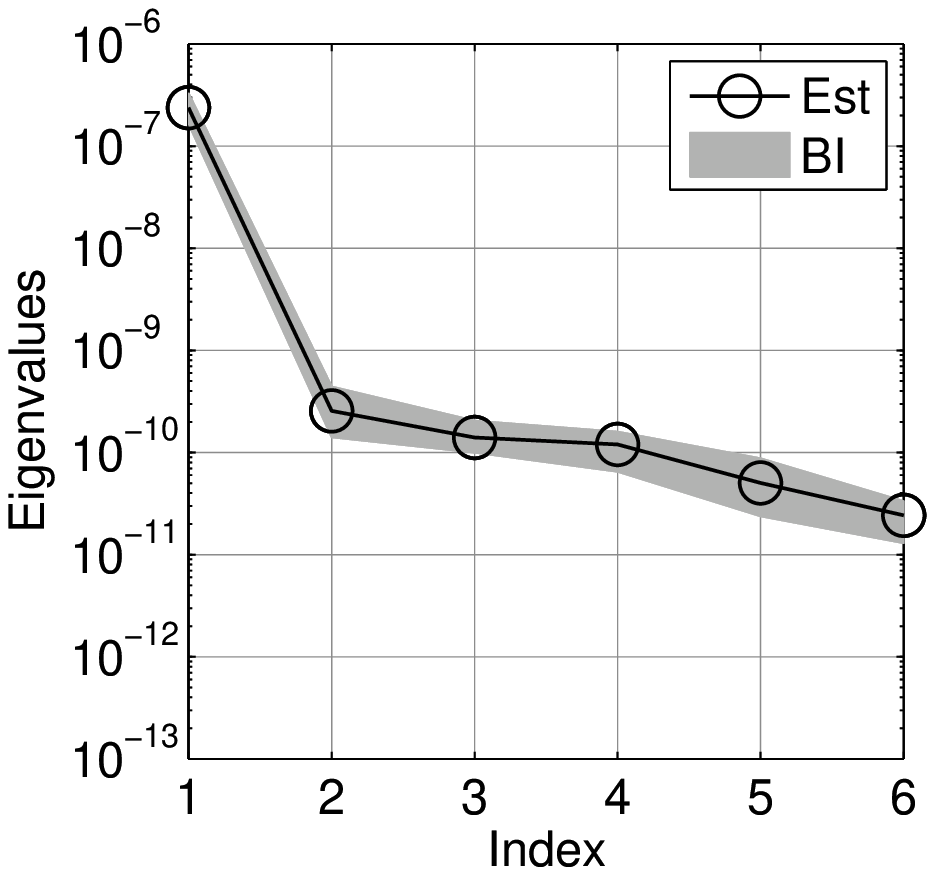}
}\\
\subfloat[$\beta=1$, $\alpha=2$]{
\includegraphics[width=0.45\linewidth]{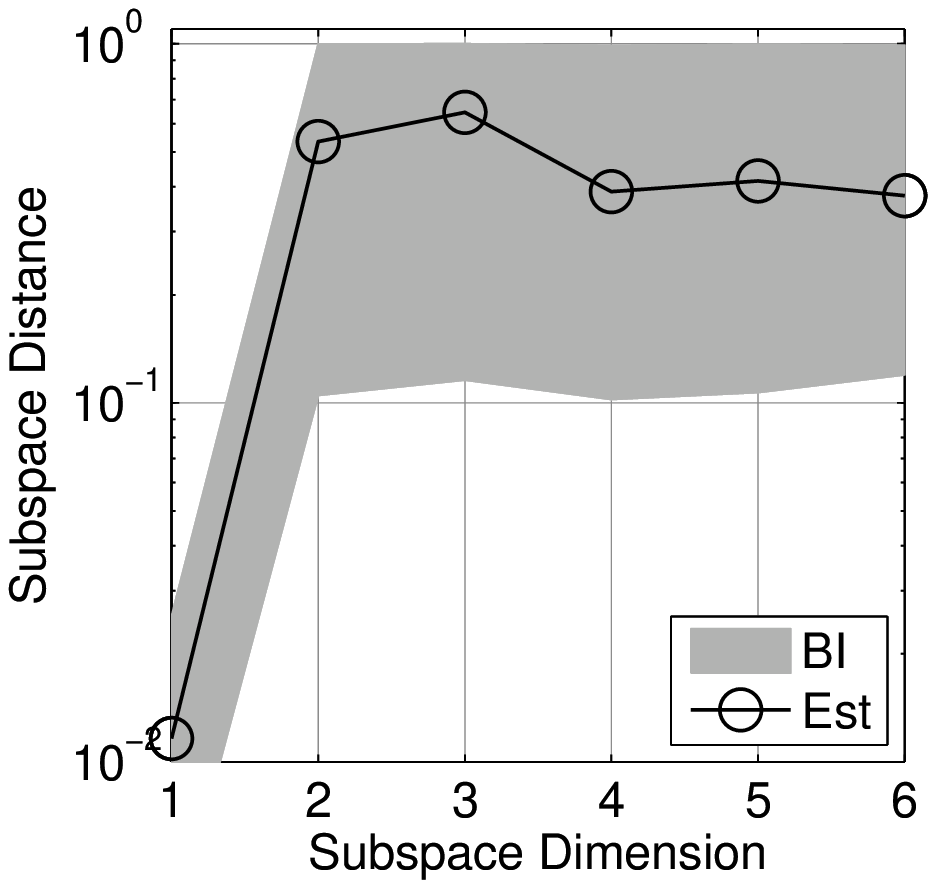}
}\;
\subfloat[$\beta=1$, $\alpha=10$]{
\includegraphics[width=0.45\linewidth]{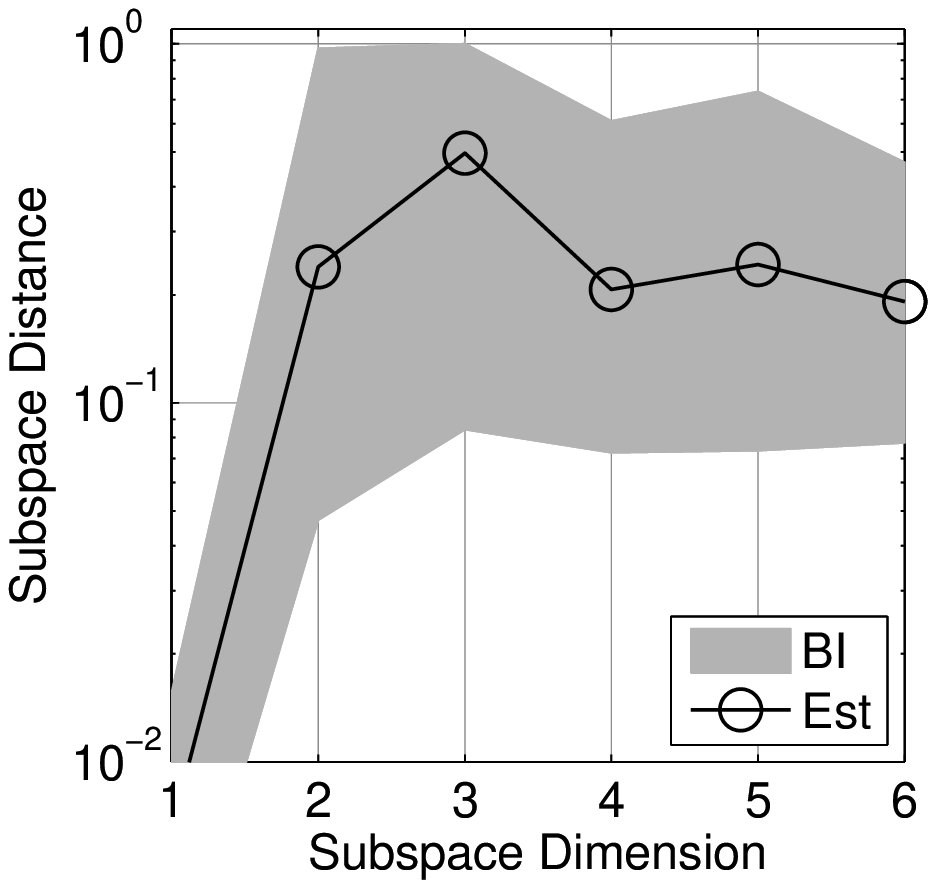}
}
\caption{The top row shows estimates of the eigenvalues of $\mC$ along with the bootstrap intervals for the quantity of interest \eqref{eq:qoi} from the parameterized PDE model with the long correlation length $\beta=1$ from \eqref{eq:corr}. The bottom row shows the estimates and bootstrap intervals on the distance between the estimated active subspace and the true active subspace. The left column is computed with the multiplier $\alpha=2$ when choosing $N$; the right column uses $\alpha=10$. The gap between the first and second eigenvalue is significant as judged by the gap between the bootstrap intervals.}
\label{fig:pde0}
\end{figure} 

\begin{figure}[ht]
\centering
\subfloat[$\beta=0.01$, $\alpha=2$]{
\includegraphics[width=0.45\linewidth]{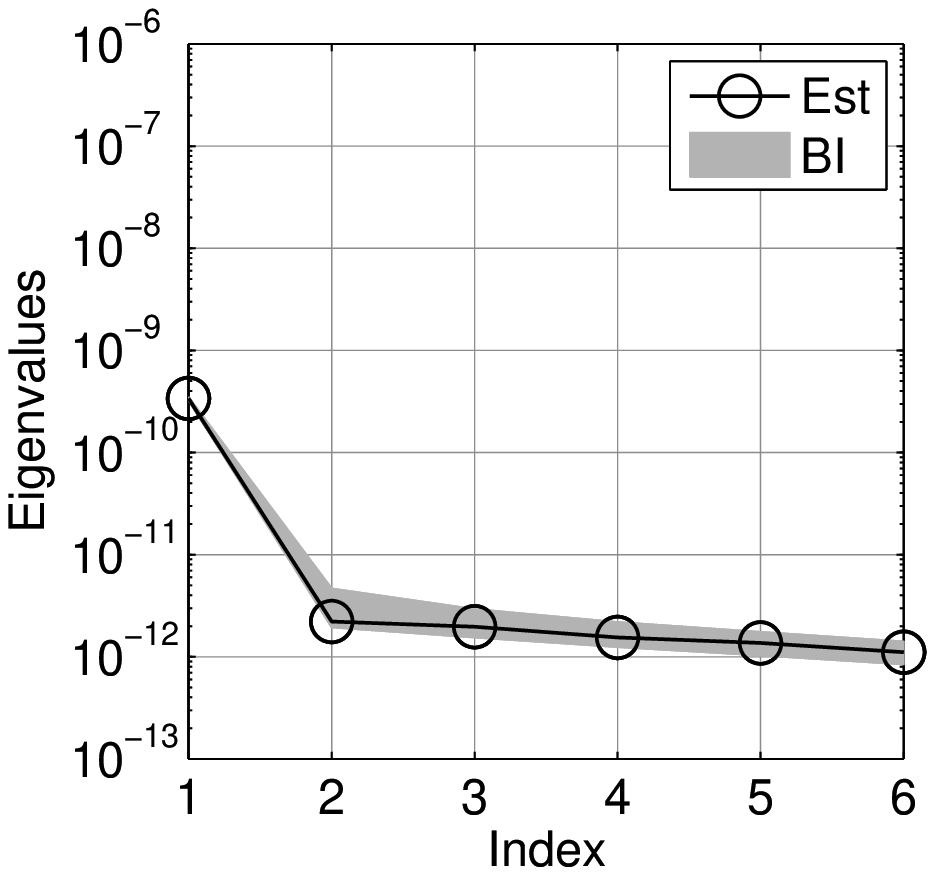}
}\;
\subfloat[$\beta=0.01$, $\alpha=10$]{
\includegraphics[width=0.45\linewidth]{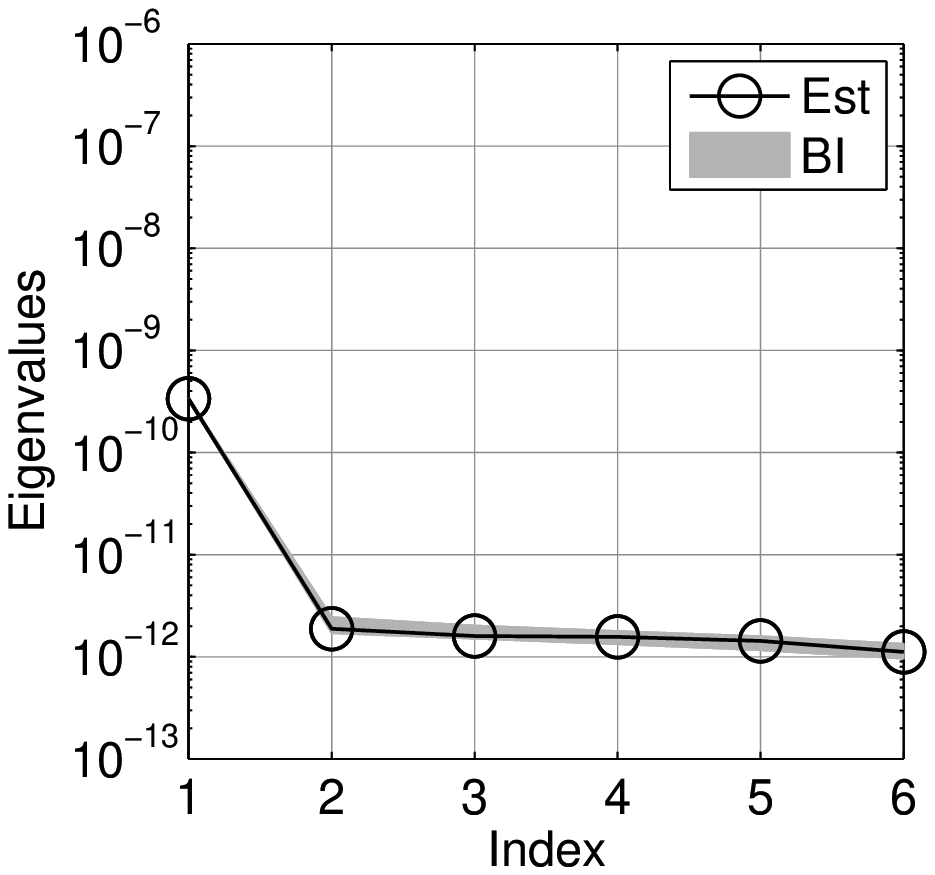}
}\\
\subfloat[$\beta=0.01$, $\alpha=2$]{
\includegraphics[width=0.45\linewidth]{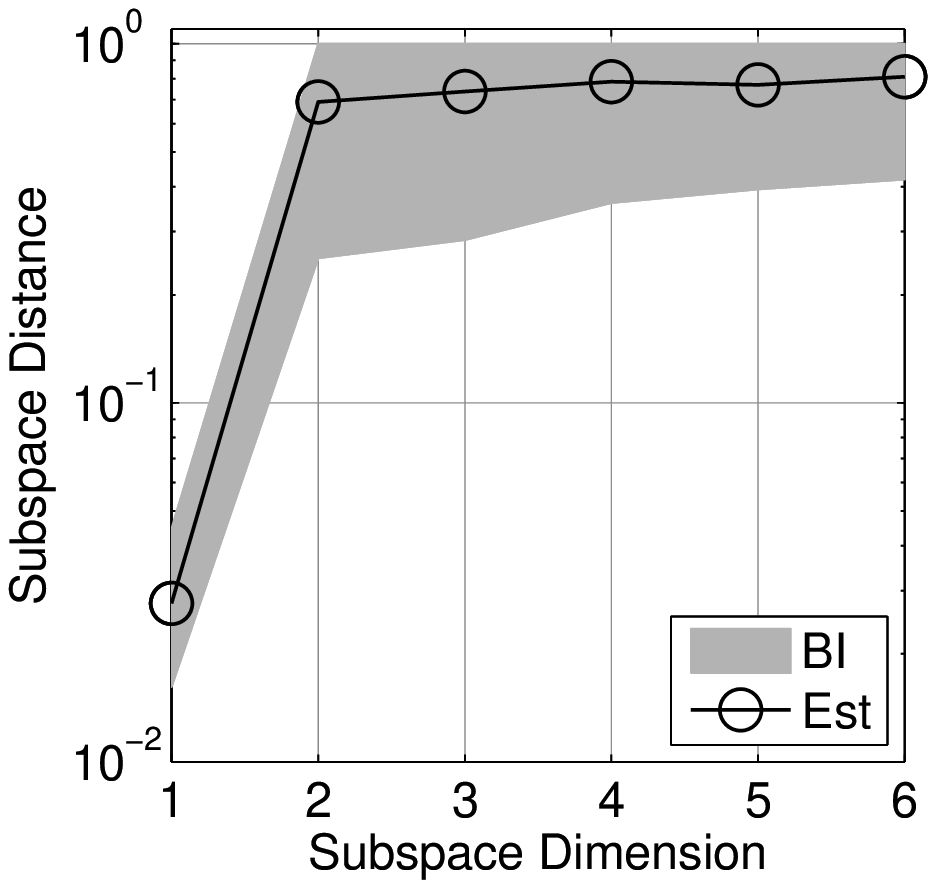}
}\;
\subfloat[$\beta=0.01$, $\alpha=10$]{
\includegraphics[width=0.45\linewidth]{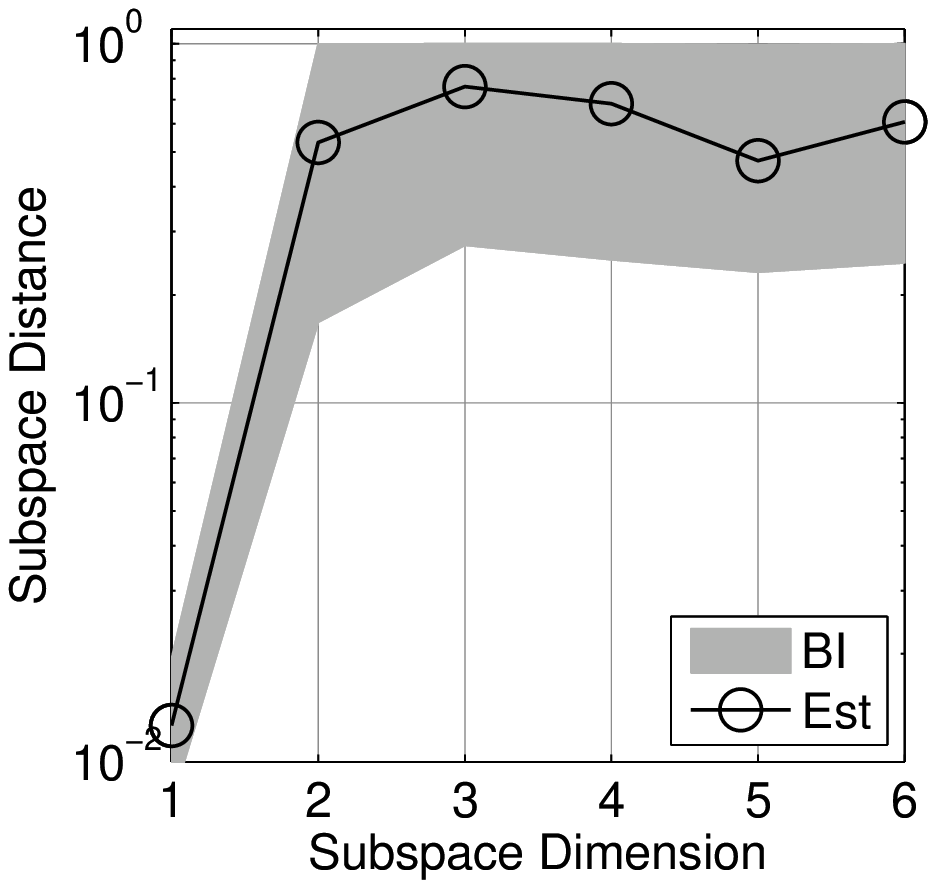}
}
\caption{The top row shows estimates of the eigenvalues of $\mC$ along with the bootstrap intervals for the quantity of interest \eqref{eq:qoi} from the parameterized PDE model with the short correlation length $\beta=0.01$ from \eqref{eq:corr}. The bottom row shows the estimates and bootstrap intervals on the distance between the estimated active subspace and the true active subspace. The left column is computed with the multiplier $\alpha=2$ when choosing $N$; the right column uses $\alpha=10$. The gap between the first and second eigenvalue is significant as judged by the gap between the bootstrap intervals.}
\label{fig:pde1}
\end{figure} 

\section{Summary and conclusions}

Consider a scalar-valued function of several variables. The average outer product of the gradient with itself is the central matrix in the development of active subspaces for dimension reduction. The dominant eigenvectors define the directions along which input perturbations change the output more, on average. We have analyzed a Monte Carlo method for approximating this matrix and its eigenpairs. We use recent theory developed for the eigenvalues of sums of random matrices to analyze the probability that the finite sample eigenvalue estimates deviate from the true eigenvalues, and we combine this analysis with results from matrix computations to derive results for the subspaces. We extend this analysis to quantities computed with samples of approximate gradients, e.g., finite differences. We also provide a practical computational approach that employs the bootstrap to reveal the error in the eigenvalues and the stability of the subspace. 

Our analysis offers answers to the following important questions. First, how many gradient samples does one need for an accurate approximation of the first $k$ eigenvalues? Precise theoretical bounds motivate a heuristic that chooses a number proportional to $k$ times the log of the dimension $m$. Second, what can be said about the accuracy of the estimated subspace? The accuracy of the estimated subspace is directly related to gaps in the eigenvalues. Third, how does one judge the stability of the computed quantities? We propose to use bootstrap intervals for the eigenvalues and the stability of the subspace. Finally, how does this analysis change when gradients are not exact but approximate? Our theory shows that approximate gradients introduce a bias term in the error bounds that goes to zero as the approximate gradients become more accurate. The numerical examples suggest that this bias can produce inaccurate subspaces when the gradients are not well approximated. 

\section*{Acknowledgments}
The first author was partially supported by the U.S. Department of Energy Office of Science, Office of Advanced Scientific Computing Research, Applied Mathematics program under Award Number DE-SC-0011077. 
The second author would like to thank the Simon's Institute for Theory of Computing program on Big Data for the opportunity to learn about the
randomized methods used in this paper and NSF CAREER award CCF-114975.

\bibliographystyle{siam}
\bibliography{compute-asm}

\begin{thebibliography}{10}

\bibitem{Antoulas2005}
{\sc Athanasios~C Antoulas}, {\em Approximation of large-scale dynamical
  systems}, vol.~6, Society for Industrial and Applied Mathematics, 2005.

\bibitem{berkooz1993pod}
{\sc G~Berkooz, P~Holmes, and J~L Lumley}, {\em The proper orthogonal
  decomposition in the analysis of turbulent flows}, Annual Review of Fluid
  Mechanics, 25 (1993), pp.~539--575.

\bibitem{Borzi2012}
{\sc Alfio Borzi and Volker Schulz}, {\em Computational Optimization of Systems
  Governed by Partial Differential Equations}, SIAM, 2012.

\bibitem{Bryson75}
{\sc Arther~E. Bryson and Yu-Chi Ho}, {\em Applied Optimal Control:
  Optimization, Estimation, and Control}, Hemisphere Publishing Corportation,
  1975.

\bibitem{Bungartz2004}
{\sc Hans-Joachim Bungartz and Michael Griebel}, {\em Sparse grids}, Acta
  Numerica, 13 (2004), pp.~147--269.

\bibitem{Caflisch1998}
{\sc Russel~E. Caflisch}, {\em Monte carlo and quasi-monte carlo methods}, Acta
  Numerica, 7 (1998), pp.~1--49.

\bibitem{constantine2014active}
{\sc P.~Constantine, E.~Dow, and Q.~Wang}, {\em Active subspace methods in
  theory and practice: Applications to kriging surfaces}, SIAM Journal on
  Scientific Computing, 36 (2014), pp.~A1500--A1524.

\bibitem{constantine2014discovering}
{\sc Paul~G Constantine, Brian Zaharatos, and Mark Campanelli}, {\em
  Discovering an active subspace in a single-diode solar cell model}, arXiv
  preprint arXiv:1406.7607,  (2014).

\bibitem{donoho2009reproducible}
{\sc David~L Donoho, Arian Maleki, Inam~Ur Rahman, Morteza Shahram, and
  Victoria Stodden}, {\em Reproducible research in computational harmonic
  analysis}, Computing in Science \& Engineering, 11 (2009), pp.~8--18.

\bibitem{efron1994introduction}
{\sc Bradley Efron and Robert~J Tibshirani}, {\em An Introduction to the
  Bootstrap}, vol.~57, CRC press, 1994.

\bibitem{Fornasier2012}
{\sc Massimo Fornasier, Karin Schnass, and Jan Vybiral}, {\em Learning
  functions of few arbitrary linear parameters in high dimensions}, Foundations
  of Computational Mathematics, 12 (2012), pp.~229--262.

\bibitem{fukumizu2014gradient}
{\sc Kenji Fukumizu and Chenlei Leng}, {\em Gradient-based kernel dimension
  reduction for regression}, Journal of the American Statistical Association,
  109 (2014), pp.~359--370.

\bibitem{Gittens-2013-nystrom}
{\sc Alex Gittens and Michael~W. Mahoney}, {\em Revisiting the {Nystr\"{o}m}
  method for improved large-scale machine learning.}, in ICML, vol.~28, 2013,
  pp.~567--575.

\bibitem{gittens2011tail}
{\sc Alex Gittens and Joel~A Tropp}, {\em Tail bounds for all eigenvalues of a
  sum of random matrices}, arXiv preprint arXiv:1104.4513,  (2011).

\bibitem{golub1996matrix}
{\sc Gene~H Golub and Charles~F Van~Loan}, {\em Matrix Computations}, Johns
  Hopkins University Press, 3rd~ed., 1996.

\bibitem{Griewank00}
{\sc Andreas Griewank}, {\em Evaluating Derivatives: Principles and Techniques
  of Algorithmic Differentiation}, Society for Industrial and Applied
  Mathematics, 2000.

\bibitem{Halko-2011-randomness}
{\sc N.~Halko, P.~G. Martinsson, and J.~A. Tropp}, {\em Finding structure with
  randomness: Probabilistic algorithms for constructing approximate matrix
  decompositions}, SIAM Rev., 53 (2011), pp.~217--288.

\bibitem{hristache2001}
{\sc Marian Hristache, Anatoli Juditsky, Jorg Polzehl, and Vladimir Spokoiny},
  {\em Structure adaptive approach for dimension reduction}, The Annals of
  Statistics, 29 (2001), pp.~1537--1566.

\bibitem{jolliffe2005principal}
{\sc Ian Jolliffe}, {\em Principal Component Analysis}, Wiley Online Library,
  2005.

\bibitem{leveque2009python}
{\sc Randall~J LeVeque}, {\em Python tools for reproducible research on
  hyperbolic problems}, Computing in Science \& Engineering, 11 (2009),
  pp.~19--27.

\bibitem{Lukaczyk2014}
{\sc Trent~W. Lukaczyk, Paul Constantine, Francisco Palacios, and Juan~J.
  Alonso}, {\em Active Subspaces for Shape Optimization}, American Institute of
  Aeronautics and Astronautics, 2014/02/16 2014.

\bibitem{More2011}
{\sc Jorge~J. Mor{\'e} and Stefan~M. Wild}, {\em Estimating derivatives of
  noisy simulations}, ACM Trans. Math. Softw., 38 (2012), pp.~19:1--19:21.

\bibitem{More2014}
{\sc Jorge~J. Mor{\'e} and Stefan~M. Wild}, {\em Do you trust derivatives or
  differences?}, Journal of Computational Physics, 273 (2014), pp.~268 -- 277.

\bibitem{mcbook}
{\sc Art~B. Owen}, {\em Monte Carlo theory, methods and examples}, 2013.
\newblock \url{http://statweb.stanford.edu/~owen/mc/}.

\bibitem{Russi2010}
{\sc Trent~M. Russi}, {\em Uncertainty Quantification with Experimental Data
  and Complex System Models}, PhD thesis, UC Berkeley, 2010.

\bibitem{saltelli2008global}
{\sc A.~Saltelli, M.~Ratto, T.~Andres, F.~Campolongo, J.~Cariboni, D.~Gatelli,
  M.~Saisana, and S.~Tarantola}, {\em Global Sensitivity Analysis: The Primer},
  John Wiley \& Sons, 2008.

\bibitem{samarov1993exploring}
{\sc A.M. Samarov}, {\em Exploring regression structure using nonparametric
  functional estimation}, Journal of the American Statistical Association, 88
  (1993), pp.~836--847.

\bibitem{stewart1973}
{\sc G.~Stewart}, {\em Error and perturbation bounds for subspaces associated
  with certain eigenvalue problems}, SIAM Review, 15 (1973), pp.~727--764.

\bibitem{Townsend-2014-continuous}
{\sc Alex Townsend and Lloyd~N. Trefethen}, {\em Continuous analogues of matrix
  factorizations}, Proceedings of the Royal Society of London A: Mathematical,
  Physical and Engineering Sciences, 471 (2014), p.~20140585.

\bibitem{tropp2012user}
{\sc Joel~A Tropp}, {\em User-friendly tail bounds for sums of random
  matrices}, Foundations of Computational Mathematics, 12 (2012), pp.~389--434.

\bibitem{Xia2007}
{\sc Yingcun Xia}, {\em A constructive approach to the estimation of dimension
  reduction directions}, The Annals of Statistics, 35 (2007), pp.~2654--2690.

\end{thebibliography}

\end{document}